\newcommand{\be}{\beta}
\newcommand{\de}{\delta}
\newcommand{\al}{\alpha}
\newcommand{\ga}{\gamma}
\newcommand{\e}{\varepsilon}
\newcommand{\Si}{\Sigma}
\newcommand{\BN}{\mathbb{N}}
\newcommand{\ov}{\overline}
\renewcommand{\phi}{\varphi}
\newtheorem{lemma}{Lemma}[section]
\newtheorem{prop}[lemma]{Proposition}
\newtheorem{thm}[lemma]{Theorem}
\newtheorem{cor}[lemma]{Corollary}
\theoremstyle{definition}
\newtheorem{Def}[lemma]{Definition}
\newtheorem{example}[lemma]{Example}
\theoremstyle{remark}
\newtheorem{rmk}[lemma]{Remark}
\numberwithin{equation}{section} \numberwithin{table}{section}
\title[Supercritical holes for the doubling map]
{Supercritical holes for the doubling map}
\author{Nikita Sidorov}
\address{
School of Mathematics, The University of Manchester,
Oxford Road, Manchester M13 9PL, United Kingdom. E-mail:
sidorov@manchester.ac.uk}
\date{\today}
\subjclass[2010]{Primary 28D05; Secondary 37B10, 68R15} \keywords{Open dynamical system, doubling map, Sturmian system, characteristic word, supercritical hole.}
\begin{document}

\begin{abstract}
For a map $S:X\to X$ and an open connected set ($=$ a hole) $H\subset X$ we define $\mathcal J_H(S)$ to be the set of points in $X$ whose $S$-orbit avoids $H$. We say that a hole $H_0$ is supercritical if
\begin{enumerate}
\item for any hole $H$ such that $\ov{H_0}\subset H$ the set $\mathcal J_H(S)$ is either empty or contains only fixed points of $S$;
\item for any hole $H$ such that $\ov H\subset H_0$ the Hausdorff dimension of $\mathcal J_H(S)$ is positive.
\end{enumerate}
The purpose of this note is to completely characterize all supercritical holes for the doubling map $Tx=2x\bmod1$.
\end{abstract}

\maketitle

\section{Introduction and initial results}

The present paper is concerned with an area of dynamics which is usually referred to as ``open dynamical systems'' or, more colloquially, ``maps with holes''. Let us remind the reader the basic set-up: let $X$ be a compact (or precompact) metric space and $S:X\to X$ be a map with positive topological entropy (for the definition of topological entropy see, e.g., \cite[Definition~3.1.3]{KH}). Let $H\subset X$ be an open connected set which we treat as a {\em hole}.

We denote by $\mathcal J_H(S)$ the set of all points in $X$ whose $S$-orbit does not intersect $H$. In other words,
\[
\mathcal J_H(S)=X\setminus \bigcup_{n=-\infty}^\infty S^{-n} H
\]
if $S$ is invertible or
\[
\mathcal J_H(S)=X\setminus \bigcup_{n=0}^\infty S^{-n} H
\]
if it is not. Clearly, $\mathcal J_H(S)$ is $S$-invariant, and in a number of recent papers certain dynamical properties of the  exclusion map $S|_{\mathcal J_H(S)}$ have been studied -- see, e.g., \cite{BKT} and the references therein.

We believe a more immediate issue here is the ``size'' of the set $\mathcal J_H(S)$ -- after all, if it is countable (or, even worse, empty), any questions regarding the dynamics of the exclusion map become uninteresting.

Let $X=[0,1)$ and $Tx=2x\bmod1$, the famous doubling map. Assume first that $H$ is a symmetrical interval about $1/2$, i.e., $H=H_a:=(a,1-a)$ for some $a\in(0,1/2)$. Note first that if $a<1/3$, then $\mathcal J_{H_a}(T)=\{0\}$, since for any $x$ in $(a/2^n,a/2^{n-1})$ or in $(1-a/2^{n-1}, 1-a/2^n)$ with $n\ge1$, we have $T^n(x)\in (a,2a)$ or $(1-2a,1-a)$, both being subsets of $H_a$.

On the other hand, if $a\ge1/3$, then the 2-cycle $\{1/3, 2/3\}$ does not intersect $H_a$. Let us give a general definition:

\begin{Def}\label{def:first}
Let $\mbox{Fix}(S)=\{x\in S : Sx=x\}$. We say that a hole $H_0$ is {\em first order critical} for $S$ if
\begin{enumerate}
\item for any hole $H$ such that $\ov{H_0}\subset H$ we have $\mathcal J_H(S)\subset \mbox{Fix}(S)$;
\item for any hole $H$ such that $\ov H\subset H_0$ we have $\mathcal J_H(S)\not\subset \mbox{Fix}(S)$.
\end{enumerate}
\end{Def}

Thus, $H_{1/3}$ is a first order critical hole for $T$. Note that if $a\in(1/3, 2/5)$, then $\mathcal J_{H_a}(T)=\mathcal J_{H_{1/3}}(T)$, i.e., just the 2-cycle and all its preimages -- see \cite{ACS}. If we keep increasing $a$ towards $1/2$, we start getting more and more cycles in $\mathcal J_{H_a}(T)$. 

  Let us introduce some symbolic dynamics. Namely, put $\Sigma=\{0,1\}^\BN$ and let $\sigma$ denote the one-sided shift on $\Sigma$, i.e., $\sigma(w_1,w_2,w_3,\dots):=(w_2,w_3,\dots)$. Let $\pi:\Sigma\to[0,1]$ be defined as follows:
\[
\pi(w_1,w_2,\ldots)=\sum_{k=1}^\infty w_k2^{-k}.
\]
Then, as is well known, $\pi$ in one-to-one on all sequences except a countable set, and $\pi\sigma=T\pi$. Thus, there is one-to-one correspondence between the cycles for $T$ and those for $\sigma$. If $a\ge 2/5$, then the $\pi$-image of the 4-cycle $0110$ for $\sigma$ lies outside the hole. The next one to appear is $01101001$. etc. 

Once we have run through all these $2^n$-cycles, we reach another critical value, $a_*\approx 0.412454$ whose binary expansion is the famous Thue-Morse sequence:
\[
0110\ 1001 \ 1001 \ 0110 \ 1001 \ 0110 \dots
\]
It is known that if $a<a_*$, then $\mathcal J_{H_a}(T)$ is infinite countable; if $a>a_*$, then $\dim_H(\mathcal J_{H_a}(T))>0$ -- see \cite{ZB, Nilsson, GS}\footnote{Technically, \cite{GS} deals with the space of unique $\be$-expansions but the symbolic model is essentially the same.}. This calls for another definition:

\begin{Def}\label{def:second}
We say that a hole $H_0$ is {\em second order critical} for $S$ if
\begin{enumerate}
\item for any hole $H$ such that $\ov{H_0}\subset H$ we have $\dim_H(\mathcal J_H(S))=0$;
\item for any hole $H$ such that $\ov H\subset H_0$ we have $\dim_H(\mathcal J_H(S))>0$.
\end{enumerate}
\end{Def}

\begin{rmk}Definitions~\ref{def:first} and \ref{def:second} are in the same spirit as those of functions $\varphi$ and $\chi$ in \cite{LaMo}.
\end{rmk}

Consequently, $(a_*,1-a_*)$ is a second order critical hole for $T$. We see that there is a substantial distance between the first order and second order symmetrical critical holes for the doubling map. If we regard $a$ as time, then one may say that it takes long for $T$ to accommodate all these cycles, quite in line with the standard notion of ``route to chaos via period doubling'' common in one-dimensional dynamics. For more details see \cite{ACS}.

Let us give the central definition of this note which in a way combines the previous ones:

\begin{Def}\label{def:super}
We say that a hole $H_0$ is {\em supercritical} for $S$ if
\begin{enumerate}
\item for any hole $H$ such that $\ov{H_0}\subset H$ we have $\mathcal J_H(S)\subset \mbox{Fix}(S)$;
\item for any hole $H$ such that $\ov H\subset H_0$ we have $\dim_H(\mathcal J_H(S))>0$.
\end{enumerate}
\end{Def}

In other words, a supercritical hole is a hole which is both of first and second order. %Note that it is easy to construct a supercritical hole of full measure: take, for instance, an algebraic  automorphism $\tau$ of the $m$-torus $\BT^m$, a closed $\tau$-invariant set $\La$ of Hausdorff dimension strictly between 0 and $m$ and define $H_0=\BT^m\setminus\La$. Then $H_0$ is clearly supercritical. Finding ``small'' supercritical holes for a given map is more interesting; later we will see that for the doubling map a supercritical hole must have length at least $1/4$ (Theorem~\ref{thm:complete}) -- see also Remark~\ref{rmk:beta} at the end of the note.
Since from here on we only consider the doubling map, we would like to simplify our notation and write $\mathcal J(H)$ instead of $\mathcal J_H(T)$ or simply $\mathcal J(a,b)$ if $H=(a,b)$. Our first result yields a simple family of supercritical holes:

\begin{prop}\label{prop:degenerate}
For each $a\in[0,1/4]$, the holes $H_0=(a,1/2)$ and $H_0'=(1/2, 1-a)$ are supercritical for the doubling map $T$.
\end{prop}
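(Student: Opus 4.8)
The plan is to exploit the reflection symmetry to reduce to the single hole $H_0=(a,1/2)$, and then verify the two defining conditions of supercriticality separately. First note that $\mbox{Fix}(T)=\{0\}$, and that the involution $R(x)=1-x$ satisfies $TR=RT$ and maps $(a,1/2)$ onto $(1/2,1-a)$, while fixing $0$ and preserving Hausdorff dimension. Hence $H_0$ is supercritical if and only if $H_0'$ is, and it suffices to treat $H_0=(a,1/2)$ with $a\in[0,1/4]$.

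For condition (i), let $H$ be a hole with $\ov{H_0}=[a,1/2]\subset H$. Since $H$ is open and contains the compact interval $[a,1/2]$, it contains an interval $(c,d)$ with $c<a\le 1/4$ and $d>1/2$, so it is enough to show $\mathcal J(c,d)=\{0\}$. I would argue that any orbit avoiding $(c,d)$ lies in $[0,c]\cup[d,1)$, and then rule out nonzero survivors in two steps. First, no orbit point lies in $(0,c]$: if $0<y\le c$, then choosing $i$ with $2^iy\in(c,2c]$ gives $T^iy=2^iy\in(c,2c]\subset(c,1/2)\subset(c,d)$, a contradiction (this is where $c<1/4$ is used). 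Thus the orbit lies in $\{0\}\cup[d,1)$. Second, the orbit cannot pass from $[d,1)$ to $0$, since $y\in[d,1)\subset(1/2,1)$ gives $Ty=2y-1\ge 2d-1>0$. Consequently a nonzero starting point would have its whole orbit in $[d,1)$, i.e. $T^nx>1/2$ for all $n$, which forces $x>1-2^{-n}$ for every $n$ and hence $x\ge1$, impossible. Therefore $\mathcal J(c,d)\subset\{0\}=\mbox{Fix}(T)$, giving (i).

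For condition (ii), let $H=(c,d)$ with $\ov H\subset H_0$, so that $d<1/2$. The idea is to exhibit inside $\mathcal J(H)$ a subsystem of positive entropy. Fix $m_0\in\BN$ with $2^{-(m_0+1)}\le \tfrac12-d$, and let $Z\subset\Sigma$ be the subshift of finite type of all sequences in which every $0$ is followed by at least $m_0$ consecutive $1$'s. If $w\in Z$ and $\sigma^nw$ begins with $1$, then $\pi(\sigma^nw)\ge \tfrac12>d$; if $\sigma^nw$ begins with $0$, the defining constraint forces $\pi(\sigma^nw)\ge \tfrac12-2^{-(m_0+1)}\ge d$ (the minimum being attained by $01^{m_0}0^\infty$). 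In either case $\pi(\sigma^nw)\notin(c,d)$, so $\pi(Z)\subset\mathcal J(H)$. Since $Z$ contains all free concatenations of the words $1$ and $01^{m_0}$, the number of admissible words of length $N$ grows like $\lambda^N$ with $\lambda>1$ the largest root of $t^{m_0+1}=t^{m_0}+1$, whence $h_{\mathrm{top}}(Z)>0$. By the standard correspondence between topological entropy and Hausdorff dimension under the base-$2$ coding, $\dim_H\mathcal J(H)\ge\dim_H\pi(Z)=h_{\mathrm{top}}(Z)/\log2>0$, which is (ii).

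The reflection reduction and the entropy–dimension bookkeeping in (ii) are routine; the genuine obstacle is the completeness of (i), namely ruling out \emph{every} nonzero survivor. The delicate point is the behaviour on $[d,1)$: one must show that an orbit cannot hide indefinitely near the point $1$, which is exactly what the step ``$T^nx>1/2$ for all $n$ forces $x\ge1$'' accomplishes. I would also check the edge case $d=1$ (where $[d,1)$ is empty and the claim is immediate) and confirm that the non-uniqueness of dyadic expansions does not affect the argument, since $T^nx=\pi(\sigma^nw)$ holds at the level of reals.
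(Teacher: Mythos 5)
Your proof is correct and takes essentially the same approach as the paper: part (ii) is the paper's argument verbatim in substance (the subshift in which every $0$ is followed by at least $m_0$ ones, plus $\dim_H=h_{\mathrm{top}}/\log 2$), and your part (i), though phrased as interval-chasing rather than the paper's observation that a surviving orbit can contain no factor $01$ in its binary expansion, rests on the same two facts (points in $(0,c]$ are doubled into the hole, and no orbit can remain in $(1/2,1)$ forever). The only blemishes are cosmetic and harmless: when $a=0$ there is no $c<a$ inside $[0,1)$ (take $c=0$, where your first step is vacuous), and $01^{m_0}0^\infty\notin Z$, so your bound is a valid infimum rather than an attained minimum.
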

\begin{proof} We will prove the first claim; the second one is obtained by swapping 0 and 1. Let us check both conditions of Definition~\ref{def:super}.

\smallskip
\noindent
(i) Suppose $\ov{H_0}\subset H$; then $[1/4,1/2]\subset H$, which means that each point $x\in H$ whose $T$-orbit avoids $H$ cannot have $01$ in its binary expansion. Hence $x=0$, because if the binary expansion of $x$ ends with $10^\infty$, it can be replaced with $01^\infty$, which lies in $H$.

\medskip\noindent (ii) Let $\ov H\subset H_0$. Fix $n\ge2$ and consider the following subshift of finite type:
\[
\Sigma_n=\{w\in\Sigma : w_k=0\implies w_{k+j}=1, \ j=1,\dots, n\}.
\]
When each 0 in the binary expansion of some $x\in(0,1)$ is succeeded by at least $n$ consecutive 1s, this means that $x\ge \frac{2^{-2}+2^{-3}+\dots+2^{-n}}{1-2^{-n-1}}$. Thus,
\[
\pi(\Sigma_n)\subset \left[\frac{\frac12-2^{-n-1}}{1-2^{-n-1}}, 1\right),
\]
which implies that $\pi(\Sigma_n)\subset \mathcal J(H)$ for all $n$ large enough. The topological entropy $h_{top}$ of the subshift $\sigma|_{\Si_n}$ is positive, whence $\dim_H \mathcal J(H)\ge \dim_H\pi(\Sigma_n)=h_{top}(\sigma|_{\Si_n})/\log2>0$.
\end{proof}

In the next section we will construct a continuum of supercritical holes for $T$ parametrized by Sturmian sequences. In Section~\ref{sec:complete} we present a complete list of supercritical holes for $T$.

\section{Sturmian holes}

For our purposes we need to define Sturmian systems. We will adapt our exposition to our needs; for a detailed survey and proofs of the auxiliary results stated below see \cite[Chapter~2]{Loth}.

We say that a finite  word $u$ is a {\em factor of} $w$ if there exists $k$ such that $u=w_k\dots w_{k+n}$ for some $n\ge0$. For a finite word $w$ let $|w|$ denote the length of $w$ and let $|w|_1$ stand for the number of 1s in $w$.  A finite or infinite word $w$ is {\em balanced} if for any $n\ge1$ and any two factors $u,v$ of $w$ of length~$n$ we have $||u|_1-|v|_1|\le1$.

Let $\ga\in(0,1/2)\setminus\mathbb Q$ and let its continued fraction expansion be denoted by $[d_1+1,d_2,d_3,\dots]$ with $d_1\ge1$ (in view of $\ga<1/2$). Let $p_n/q_n$ stand for the finite continued fraction $[d_1+1,\dots,d_n]$ (in least terms). We define the sequence of 0-1 words given by $\ga$ as follows: $s_{-1}=1, s_0=0,
s_{n+1}=s_n^{d_{n+1}}s_{n-1}, \ n\ge0$. The word $s_n$ is called the $n$th {\em standard word} given by $\ga$.

It is well known that $|s_n|_1=p_n$ and $|s_n|=q_n$ for every $n \geq 1$. Since $s_{n+1}$ begins with $s_n$ and $q_n\to\infty$, we conclude that there exists the limit $s_\infty(\ga)=\lim_{n\to\infty} s_n$. This word is called {\em the characteristic word given by $\ga$}. Note that any factor of a characteristic word is balanced.

Define the {\em Sturmian system given by $\ga$} as follows:
\[
X_\ga=\overline{\{\sigma^n s_\infty(\ga) : n\in\BN\}}.
\]
It is known that $X_\ga$ is a perfect set and $\sigma|_{X_\ga}$ is minimal and has zero entropy. It is also known that any sequence $w$ in $X_\ga$ which starts with 1 is a combination of blocks $10^{d_1}$ and $10^{d_1+1}$. If $w$ starts with 0, it actually starts with $0^{d_1}1$ or $0^{d_1+1}1$, with the above rule valid for the rest of $w$. In particular, the blocks of 0's are bounded for a given $\ga$.

It is obvious that $\pi|_{X_\ga}$ is an injection. From the above it follows that $K_\ga:=\pi(X_\ga)$ is a $T$-invariant Cantor set of zero Hausdorff dimension in $(0,1)$ which does not contain $1/2$ (since $1/2$ has the binary expansion with an unbounded number of 0's or 1's).

\begin{rmk}The set $K_\ga$ has been considered in \cite{BS} where the authors proved in particular that any ordered $T$-invariant subset of $[0,1]$ must be $K_\ga$ for some $\ga$ or its finite version for $\ga\in\mathbb Q$.
\end{rmk}

Recall that for $w,w'\in\Sigma$ we have that $w$ is {\em lexicographically less} than $w'$ if $w_k<w_k'$ for the smallest $k$ such that $w_k\neq w_k'$. (Notation: $w\prec w'$.) It is clear that $w\preceq w'\iff \pi(w)\le\pi(w')$.

Let $H_0(\ga)$ denote the hole in $K_\ga$ which contains $1/2$ (it is also the largest hole in $K_\ga$). More precisely, let 0-$\max(\ga)$ denote the lexicographically largest sequence in $X_\ga$ which starts with 0; similarly, we define 1-$\min(\ga)$. It is known (see \cite[Chapter~2]{Loth}) that the minimum sequence in $X_\ga$ is in fact $0s_\infty(\ga)$ and the maximum is $1s_\infty$. Consequently, 0-$\max(\ga)=01s_\infty(\ga)$ and 1-$\min(\ga)=10s_\infty(\ga)$, and $H_0(\ga)=(\pi(01s_\infty(\ga)), \pi(10s_\infty(\ga)))$. Clearly, the length of $H_0(\ga)$ equals $1/4$.

\begin{Def}We call $H_0(\ga)$ the {\em Sturmian hole} associated with $\ga$.
\end{Def}

\begin{example}Let $\ga=(3-\sqrt5)/2=[2,1,1,1,\dots]$. Here $s_{n+1}=s_ns_{n-1}$, which implies that $s_\infty(\ga)$ is none other than the Fibonacci word
\[
f=010010100100101001010\dots
\]
Hence
\begin{align*}
\pi^{-1}H_0((3-\sqrt5)/2) = (&01010010100100101001010\dots, \\ &10010010100100101001010\dots),
\end{align*}
and $H_0\left(\frac{3-\sqrt5}2\right)\approx(0.322549, 0.572549)$.
\end{example}

\begin{Def}
We say that a closed interval $I$ is a {\em trap} if $\bigcup_{k\ge0}T^{-k}(I)=(0,1)$.
\end{Def}

\begin{lemma}\label{lem:trap}
Let $\ga=[d_1+1,d_2,\dots]$. Fix $n\ge2$ and put $r=[d_1+1,\dots, d_n]$. Set $s=s(r)=01s_n, t=t(r)=10s_n$. Then $[\pi(s^\infty), \pi(t^\infty)]$ is a trap.
\end{lemma}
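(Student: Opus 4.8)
The plan is to recast the trap property as a purely symbolic, lexicographic condition and then analyse that condition combinatorially. Put $a=\pi((01s_n)^\infty)$ and $b=\pi((10s_n)^\infty)$, so $I=[a,b]$ with $a<\tfrac12<b$; the assertion is that no $x\in(0,1)$ keeps its whole forward orbit inside $[0,a)\cup(b,1]$. Every dyadic rational in $(0,1)$ has a terminating expansion ending in $1$, so some iterate sends it to $\pi(10^\infty)=\tfrac12\in(a,b)$ and it enters $I$. For non-dyadic $x$ with unique expansion $w$, the identities $\pi\sigma=T\pi$ and $w\preceq w'\Leftrightarrow\pi(w)\le\pi(w')$ give $T^{k}x\in I\Leftrightarrow(01s_n)^\infty\preceq\sigma^{k}w\preceq(10s_n)^\infty$. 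Since $(01s_n)^\infty$ begins with $0$ and $(10s_n)^\infty$ begins with $1$, the orbit of $x$ avoids $I$ exactly when $(\star)$ holds: for every $k\ge0$, $\sigma^{k}w\prec(01s_n)^\infty$ when $w_{k+1}=0$ and $\sigma^{k}w\succ(10s_n)^\infty$ when $w_{k+1}=1$. So it suffices to prove that the only $w$ obeying $(\star)$ are $0^\infty$ and $1^\infty$, corresponding to $x\in\{0,1\}$, neither of which lies in $(0,1)$.

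To use $(\star)$ I would reduce it once more, exploiting that both endpoint words are built from the single block $s_n$: matching the prefix $01$ shows that after any occurrence of $01$ in $w$ the tail must be $\prec\sigma^{2}(01s_n)^\infty=s_n(01s_n)^\infty$, and symmetrically after any $10$ the tail must be $\succ s_n(10s_n)^\infty$. The engine of the proof is to iterate these two one-sided constraints: an occurrence of $01$ (resp.\ $10$) forces the next block to reproduce $s_n$, and re-reading the constraint at the following transition propagates the standard-word pattern along all of $w$. A convenient way to organize the bookkeeping is to pass to the orbit closure and examine its lexicographically largest and smallest elements $M$ and $m$; each is shift-extremal ($\sigma^{j}M\preceq M$ and $\sigma^{j}m\succeq m$) and still obeys the closed form of $(\star)$, and the goal is to show that non-constant $w$ drives $M$ and $m$ onto the boundary orbits of $(10s_n)^\infty$ and $(01s_n)^\infty$, at which stage some shift of $w$ is forced into $[(01s_n)^\infty,(10s_n)^\infty]$, contradicting the strict inequalities in $(\star)$. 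One case is clean and worth recording: if $x\in K_\ga$, then since $10s_\infty(\ga)$ and $(10s_n)^\infty$ agree on the prefix $10s_n$ and thereafter $s_\infty(\ga)$ continues with a $0$ while the periodic word restarts with a $1$, we get $10s_\infty(\ga)\prec(10s_n)^\infty$, hence $\pi(10s_\infty(\ga))<b$; as $\pi(10s_\infty(\ga))>\tfrac12>a$, this places the point $\pi(10s_\infty(\ga))\in K_\ga$ in the interior of $I$, and minimality of $\sigma|_{X_\ga}$ then forces every $K_\ga$-orbit to meet $\mathrm{int}(I)$, so $(\star)$ fails on $K_\ga$.

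The hard part, and the main obstacle, is the remaining case $x\notin K_\ga$. Here one cannot simply invoke the Sturmian description of $K_\ga$, because $I$ is genuinely distinct from the maximal gap $H_0(\ga)$ and is not nested with it: the inequality $\pi(10s_\infty(\ga))<b$ just obtained shows that $I$ reaches strictly past the right end of $H_0(\ga)$ into $K_\ga$, so avoiding $I$ is not the same as never entering $H_0(\ga)$, and a point may dodge $I$ for a long time while visiting $H_0(\ga)$ repeatedly. Thus the combinatorial propagation must control every shift of $w$ simultaneously rather than reduce to membership in $X_\ga$. I expect the two genuinely delicate points to be (a) verifying that the two one-sided constraints in $(\star)$ interlock tightly enough to force the $s_n$-block pattern with no free symbol ever available, and (b) the boundary analysis separating the true escapees $0^\infty,1^\infty$ from the periodic endpoint sequences $(01s_n)^\infty$ and $(10s_n)^\infty$, which satisfy $(\star)$ only non-strictly and hence in fact land on $\partial I\subset I$.
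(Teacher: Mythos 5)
Your reduction of the trap property to the lexicographic condition $(\star)$ is correct, and your two side cases are fine (dyadic rationals hit $1/2\in I$; points of $K_\gamma$ enter $\mathrm{int}(I)$ via the inequality $10s_\infty(\gamma)\prec(10s_n)^\infty$ and minimality). But the core of the lemma --- that $(\star)$ admits no solutions other than $0^\infty$ and $1^\infty$ --- is never proved. The step where you assert that ``an occurrence of $01$ (resp.\ $10$) forces the next block to reproduce $s_n$'' does not follow from what you derived: matching the prefix $01$ only yields the one-sided bound $\sigma^{k+2}w\prec s_n(01s_n)^\infty$, which $w$ can satisfy by dropping far \emph{below} the pattern rather than reproducing it; ruling that out requires interlocking it with the lower bounds coming from the $1$-positions inside the putative $s_n$-block, and that interlocking is exactly the content of the lemma, not a bookkeeping detail. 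You flag this yourself as ``delicate point (a),'' and likewise the boundary analysis (b) for the extremal elements $M,m$ of the orbit closure is only announced; since the closed (non-strict) form of $(\star)$ \emph{is} satisfied by the endpoint sequences $(01s_n)^\infty,(10s_n)^\infty$, the intended contradiction cannot be reached without it. So what you have is a correct reformulation plus a plan, with the engine missing.

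It is instructive that the paper sidesteps the symbolic propagation problem entirely: it proceeds by induction on the Stern--Brocot (Farey) structure of $r$. Writing $r_1<r<r_2$ for the Farey parents of $r$, one has the factorizations $s=s_1t_2$ and $t=t_2s_1$ (Lemma~3.2 of the cited paper of Hare and the author); the base case is $r=\tfrac12$, where $[\pi(s^\infty),\pi(t^\infty)]=[\tfrac13,\tfrac23]$ is a trap by the elementary argument in the introduction. The inductive step takes $x$ in the trap $[\pi(s_1^\infty),\pi(t_1^\infty)]$ (available by the induction hypothesis), notes $s_1^\infty\prec s^\infty$ and $t_1^\infty\prec t^\infty$, and funnels the orbit into the smaller interval by repeatedly applying $T^{|s_1|}$; the only point where this iteration can fail to terminate is the fixed point $x=\pi(s_1^\infty)$, which is dispatched by observing that $s_1$ is a cyclic permutation of $t_1$, so some shift of $s_1^\infty$ equals $t_1^\infty$, which lands in $(\pi(s^\infty),\pi(t^\infty))$. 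In other words, the global two-sided constraint you are trying to propagate by hand is replaced by a finite descent through coarser traps, and the boundary analysis collapses to a single explicit periodic point. If you want to salvage your approach, you would essentially have to rediscover this recursive structure (or an equivalent kneading-theory characterization of interval-avoiding sequences), since the one-sided bounds alone do not close the argument.
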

\begin{proof}Let $r_1<r<r_2$ be the Farey parents for $r$ and let $s_i=s(r_i), t_i=t(r_i), i=1,2$. It is shown in \cite[Lemma~3.2]{HS} that 
\begin{align*}
s&=s_1t_2,\\
t&=t_2s_1.
\end{align*}
To prove the claim, we will use induction of the length of $s$. If $|s|=2$, then we have $r=\frac12$, whence $[\pi(s^\infty), \pi(t^\infty)]=\left[\frac13,\frac23\right]$, about which we know that it is a trap -- see above. 

Assume now that the claim holds for all the rationals $r'$ for which $|s(r')|<s(r)$. Consequently, it holds for $s_1$. Without loss of generality we may assume $x\in[\pi(s_1^\infty),\pi(t_1^\infty)]$. If $x\in[\pi(s^\infty),\pi(t^\infty)]$, we are done; otherwise note that $s_1^\infty\prec s^\infty$ and $t_1^\infty\prec t^\infty$, which is a well known property of standard words -- see \cite[Chapter~2]{Loth}. Hence $x$ must lie in $[\pi(s_1^\infty), \pi(s^\infty))$. Put $y=T^{|s_1|}x\in [\pi(s_1^\infty), \pi(t_1^\infty))$. Again, if $y\in[\pi(s^\infty), \pi(t^\infty)]$, we are done, otherwise apply $T^{|s_1|}$ to $y$, etc.

If this process never stops, it means that $x=\pi(s_1^\infty)$. Let us show that its $T$-orbit falls in $[\pi(s^\infty),\pi(t^\infty)]$. Indeed, since $s_1$ is a cyclic permutation of $t_1$, there exists $j$ such that $\sigma^j(s_1^\infty)=t_1^\infty\prec t^\infty$. Also, since $t_1$ begins with 1, we have $\pi(t_1^\infty)>\frac12$. Hence $T^j(\pi(s_1^\infty))\in(\pi(s^\infty), \pi(t^\infty))$.
\end{proof}

\begin{thm}\label{thm:doubling}
 A Sturmian hole $H_0(\ga)$ is supercritical for the doubling map $T$ for any irrational $\ga\in(0,1/2)$.
\end{thm}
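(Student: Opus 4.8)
The plan is to verify the two conditions of Definition~\ref{def:super} separately. Throughout I work in symbolic space via $\pi$, recalling that $H_0(\ga)=(\pi(01s_\infty),\pi(10s_\infty))$ with $s_\infty=s_\infty(\ga)$, and that $\pi$ is order-preserving. For condition (ii) --- the positive-dimension part --- the key observation is that if $\ov H\subset H_0(\ga)$, then $H$ is strictly contained in the open interval $(\pi(01s_\infty),\pi(10s_\infty))$, so there is some rational truncation that ``squeezes'' $H$ from inside. Concretely, since $s_\infty=\lim_n s_n$ and $\pi(01s_n^\infty)\searrow \pi(01s_\infty)$ while $\pi(10s_n^\infty)\nearrow\pi(10s_\infty)$ along the relevant subsequences (a standard-word monotonicity fact, \cite[Chapter~2]{Loth}), for $n$ large enough we get $\ov H\subset (\pi(s^\infty),\pi(t^\infty))$ where $s=01s_n$, $t=10s_n$. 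I would then exhibit a positive-entropy subshift inside the complement of $H$. The natural candidate is the coding of $K_\ga$-like points that only use blocks sitting outside $(\pi(s^\infty),\pi(t^\infty))$; more efficiently, I can mimic the proof of Proposition~\ref{prop:degenerate}~(ii) by taking all sequences built from two allowed words (e.g.\ $s$ and $t$, or suitable long blocks avoiding the forbidden window) and checking the resulting full-shift-on-two-symbols image avoids $H$. Since such a subshift has topological entropy $\log 2/|s|>0$, its $\pi$-image has positive Hausdorff dimension and lies in $\mathcal J(H)$.

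For condition (i) --- the ``fixed points only'' part --- suppose $\ov{H_0(\ga)}\subset H$. I must show that any $x$ whose $T$-orbit avoids $H$ has $x\in\{0\}=\mathrm{Fix}(T)$ (recall $\mathrm{Fix}(T)=\{0\}$ for the doubling map, after the $10^\infty\sim01^\infty$ identification). Here is where Lemma~\ref{lem:trap} does the heavy lifting. Since $\ov{H_0(\ga)}\subset H$, the endpoints $\pi(01s_\infty)$ and $\pi(10s_\infty)$ lie in the \emph{interior} of $H$, so I can find a rational truncation $r=[d_1+1,\dots,d_n]$ with $n$ large so that the trap interval $I=[\pi(s^\infty),\pi(t^\infty)]$ from Lemma~\ref{lem:trap} (with $s=01s_n$, $t=10s_n$) satisfies $I\subset H$. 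This uses the \emph{opposite} monotonicity from condition (ii): as $n\to\infty$, the trap endpoints converge to $\pi(01s_\infty),\pi(10s_\infty)$ from the correct sides so that eventually $I$ fits inside the enlarged hole $H$. Once $I\subset H$, Lemma~\ref{lem:trap} says $\bigcup_{k\ge0}T^{-k}(I)=(0,1)$, meaning \emph{every} point of $(0,1)$ eventually maps into $I\subset H$. Hence the only points whose orbit can avoid $H$ are the endpoints $0$ (and its identifications), giving $\mathcal J(H)\subset\mathrm{Fix}(T)$.

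The main obstacle I anticipate is getting the two monotonicity/convergence directions exactly right and matching them to the correct inclusion. Condition (ii) needs the truncated window $(\pi(s^\infty),\pi(t^\infty))$ to \emph{contain} $\ov H$ while staying \emph{inside} the full Sturmian hole, whereas condition (i) needs the trap $I=[\pi(s^\infty),\pi(t^\infty)]$ to \emph{sit inside} the enlarged $H$ while \emph{containing} the Sturmian hole's closure in its interior. These pull the endpoints in opposite directions, and I must confirm that as $n\to\infty$ the standard-word truncations $s_n\to s_\infty$ produce $\pi(s^\infty)$ and $\pi(t^\infty)$ converging monotonically to $\pi(01s_\infty)$ and $\pi(10s_\infty)$ respectively, from the side that makes each argument work. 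The clean fact to invoke is $s_1^\infty\prec s^\infty$ and $t_1^\infty\prec t^\infty$ along Farey refinements (used already inside Lemma~\ref{lem:trap}), which pins down the direction of approach. A secondary technical point is handling the countably many binary-rational ambiguities under $\pi$, but these are harmless since they form a countable set and do not affect Hausdorff dimension or the fixed-point conclusion.
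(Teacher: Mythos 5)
Your part (i) is fine, and in fact slightly more direct than the paper's: you pick a single trap $I=[\pi((01s_n)^\infty),\pi((10s_n)^\infty)]$ from Lemma~\ref{lem:trap} with $I\subset H$ and conclude $\mathcal J(H)\subset\{0\}$ immediately, whereas the paper first proves the stronger identification $\mathcal J(H_0(\ga))\setminus\{0\}=K_\ga$ (using the traps in the limit) and then invokes minimality of $T|_{K_\ga}$. Note that for this step you only need \emph{convergence} of the trap endpoints to $\pi(01s_\infty)$ and $\pi(10s_\infty)$, not monotonicity; and your stated monotonicity ($\pi(01 s_n^\infty)\searrow$, $\pi(10 s_n^\infty)\nearrow$) is actually wrong in detail --- the approach alternates sides with the parity of $n$, with \emph{both} endpoints on the same side: $(s_n)^\infty\succ s_\infty(\ga)$ for $n$ odd and $\prec$ for $n$ even (this is exactly what is used in part (d) of the proof of Proposition~\ref{prop:setS}, via Lemma~\ref{lem:sn1}). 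For (i) this is cosmetic; for (ii) it undermines your ``squeeze'' picture.

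The genuine gap is in part (ii), where the step you describe as ``checking the resulting full-shift-on-two-symbols image avoids $H$'' is the entire difficulty, and for your choice of blocks $s=01s_n$, $t=10s_n$ it is false without a quantitative argument you never make. Concretely: in the base case $s=01$, $t=10$ one has $\pi((st)^\infty)=\pi((0110)^\infty)=2/5$, which lies in $H_0(\ga)$ for \emph{every} $\ga$ (since $\min H_0(\ga)\in(1/4,1/3)$ and $\max H_0(\ga)>1/2$). More generally, any concatenation containing the junction $st$ produces a tail $01s_n10\cdots$, and since $s_\infty(\ga)$ continues after its prefix $s_n$ with a word beginning with $0$, this tail satisfies $01s_n10\cdots\succ 01s_\infty(\ga)$ while its value is below $1/2<\pi(10s_\infty(\ga))$; so it lies \emph{strictly inside} $H_0(\ga)$, at distance roughly $2^{-q_n}$ above $\min H_0(\ga)$. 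Hence your subshift always penetrates the Sturmian hole, and avoids a given $H$ with $\ov H\subset H_0(\ga)$ only if $n$ is chosen so large that these intrusions (and the symmetric ones from junctions $ts$ near the top endpoint, plus all mid-block shifts) fall in the gap between $\partial H_0(\ga)$ and $\ov H$ --- estimates which require the lexicographic machinery of Lemmas~\ref{lem:sn1} and \ref{lem:minmax} and which you have not supplied. Your fallback, ``blocks avoiding the forbidden window,'' is self-defeating: by Lemma~\ref{lem:trap} the window $[\pi(s^\infty),\pi(t^\infty)]$ is a trap, so \emph{every} orbit enters it, and any positive-dimension survivor set must thread between $H$ and the window's boundary rather than avoid the window. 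The paper sidesteps all of this by choosing the two \emph{consecutive standard words} $s_n, s_{n+1}$ as blocks: every shifted concatenation then agrees with some shift of $s_\infty(\ga)$ to depth $q_{n-1}$, so the resulting subshifts $X_{\ga,n}$ converge to $X_\ga$ in the Hausdorff metric, and disjointness of $\pi(X_{\ga,n})$ from $\ov H$ for large $n$ is automatic because the compact set $K_\ga$ is at positive distance from $\ov H$. That choice of blocks, and the convergence argument replacing your unproved avoidance claim, is the missing idea.
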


\begin{proof} (i) We want to show first that
\begin{equation}\label{eq:Kgamma}
K_\ga = \mathcal J(H_0(\ga))\setminus\{0\}.
\end{equation}
Note that
\[
\mathcal J(H_0(\ga))\setminus\{0\}=(0,1)\setminus \bigcup_{n=0}^\infty T^{-n} H_0(\ga).
\]
Firstly, we show that
\begin{equation}\label{eq:disjoint}
K_\ga \cap \bigcup_{n=0}^\infty T^{-n} H_0(\ga)=\varnothing.
\end{equation}
Since $K_\ga$ is $T$-invariant, we have $T^{-n}K_\ga\supset K_\ga$, whence
it suffices to show $T^{-n}K_\ga\cap T^{-n} H_0(\ga)=\varnothing$ for all $n\ge0$. This follows from $H_0(\ga)\cap K_\ga=\varnothing$, which proves (\ref{eq:disjoint}).

Let within this proof $s_n(\ga)$ denote the standard word corresponding to $[d_1+1,\dots, d_n]$. (Thus, $s_n(\ga)\to s_\infty(\ga)$ as $n\to\infty$.) Since $H_0(\ga)$ is the limit of $(\pi(01(s_n(\ga))^\infty), \pi(10(s_n(\ga))^\infty))$, and the these intervals approach $H_0(\ga)$ from both ends, it follows from Lemma~\ref{lem:trap} that $\overline{H_0(\ga)}$ is a trap. Hence $(0,1)\setminus \bigcup_{k\ge0}T^{-k}(H_0(\ga))$ is a closure of the orbit of its endpoints, i.e., $K_\ga$. This proves (\ref{eq:Kgamma}).

Now suppose $\ov{H_0(\ga)}\subset H$; we have $\mathcal J(H_0(\ga))\supset \mathcal J(H)$, whence by (\ref{eq:Kgamma}), $\mathcal J(H)\setminus\{0\}\subset K_\ga$. If $x\neq0$ and $x\in \mathcal J(H)$, then $x\in K_\ga$, and by the minimality of $T|_{K_\ga}$ and the fact that $\min H_0(\ga)\in K_\ga$, there exists $n\in\BN$ such that $T^n(x)\in (\min H, \min H_0(\ga))$, which contradicts $x\in \mathcal J(H)$. Hence $\mathcal J(H)=\{0\}$.

\medskip\noindent (ii) We define $\widetilde{X}_{\ga,n}$ to be the set of all sequences constructed out of the blocks $s_n:=s_n(\ga)$ and $s_{n+1}:=s_{n+1}(\ga)$ and put $X_{\ga,n}=\ov{\{\sigma^k w\mid k\ge0, w\in \widetilde{X}_{\ga,n}\}}$. Thus, each sequence in $X_{\ga,n}$ is of the form $w'w$, where $w'=\sigma^ks_n$ or $\sigma^k s_{n+1}$ and $w$ is a block sequence whose each block is either $s_n$ or $s_{n+1}$.

We claim that
\begin{equation}\label{eq:limit}
X_{\ga,n}\to X_\ga, \quad n\to\infty, \quad \text{in the Hausdorff metric}.
\end{equation}
Since $\sigma|_{X_\ga}$ is minimal, for each sequence $u\in X_\ga$ and each $\de>0$ there exists $k\ge0$ such that $\text{dist}(u,\sigma^k s_\infty(\ga))<\de$, where, as usual, $\text{dist}(x,y)=2^{-\min\{j\ge1\,\mid\, x_j\neq y_j\}}$. To prove (\ref{eq:limit}), we first show that for each $k\ge0$ there exists $w\in X_{\ga,n}$ such that
\begin{equation}\label{eq:approx}
\text{dist}(w,\sigma^k s_\infty(\ga))<\de.
\end{equation}
Suppose $k<q_n$. Then the sequence $w=\sigma^k(s_n)(s_n)^\infty$ satisfies (\ref{eq:approx}) with $\de=2^{-q_{n-1}}$. Indeed, $s_\infty(\ga)$ always begins with $s_ns_{n-1}\dots$: if $d_{n+1}=1$, then $s_{n+1}=s_ns_{n-1}$ and if $d_{n+1}\ge2$, then $s_{n+1}=s_ns_n*$, and $s_n$ always begins with $s_{n-1}$. Hence $w$ and $\sigma^k s_\infty(\ga)$ agree at least on the first $|s_{n-1}|=q_{n-1}$ coordinates.

On the other hand, if $w\in X_{\ga,n}$, then $w$ begins with either $(\sigma^k s_n) s_{n-1}$ with $k<q_n$ or $(\sigma^k s_{n+1}) s_n$ with $k<q_{n+1}$. Since both $s_ns_{n-1}$ and $s_{n+1}s_n$ are prefixes of $s_\infty(\ga)$, we conclude that in either case $\sigma^k(s_\infty(\ga))$ is at a distance less than or equal to $2^{-q_{n-1}}$ from $w$ again.

Now suppose $\ov H\subset H_0(\ga)$. By (\ref{eq:limit}), there exists $n$ such that $\pi(X_{\ga,n})\subset \mathcal J(H)$. Hence $\dim_H(\mathcal J(H))>0$.
\end{proof}

\begin{rmk}Note that as $H_0(\ga)\to (1/4, 1/2)$ from Proposition~\ref{prop:degenerate} as $\ga\to0$. This follows from the fact that $s_\infty(\ga)$ begins with $0^{d_1}$ and consequently, tends to the zero sequence as $\ga\downarrow0$.
Since a characteristic word can begin with $(01)^n$ for arbitrarily large $n$, we have $\sup_\gamma H_0(\ga)=1/3$. Therefore,
\begin{equation}\label{eq:K}
\mathcal K:=\left\{\min H_0(\ga) \mid \ga\in(0,1/2)\setminus \mathbb Q\right\}\subset (1/4,1/3).
 \end{equation}
Since the number of balanced words grows polynomially (see, e.g.,  \cite[Corollary~18]{Mig}), it is easy to see that $\mathcal K$ is a nowhere dense set of zero Hausdorff dimension.
\end{rmk}

\section{A complete description of supercritical holes for the doubling map}
\label{sec:complete}

Put
\[
\mathcal S=\{a\in[0,1) \mid \exists b>a \ \text{such that}\ (a,b)\ \text{is supercritical for}\ T\}.
\]

It follows from Proposition~\ref{prop:degenerate} that $[0,1/4]\subset \mathcal S$ and $1/2\in\mathcal S$. It is also clear that if $a>1/2$, then $a\notin\mathcal S$, since $\mathcal J(a,b)$ is of positive dimension for any $b>a$ -- see the proof of Proposition~\ref{prop:degenerate}~(ii).

Furthermore, since $(1/3, 2/3)$ is first-order critical for $T$, it is obvious that if $a\in\mathcal S$ and $a\ge1/3$, then the corresponding $b$ should be greater than $2/3$, i.e., $a+b>1$. Again, by symmetry, $(1-b,1-a)$ is supercritical, whence it suffices to confine our search for the elements of $\mathcal S$ to the interval $(1/4,1/3]$. %Finally, $1/3\notin\mathcal S$, because $(1/3, 2/3)$ is first-order critical, whereas $\mathcal J_{(a,b)}$ is countable for any $a\in(1/3, 2/5)$ and any $b\in(3/5, 2/3)$.

Our goal is to prove that
\begin{equation}\label{eq:S14}
\mathcal S\cap [1/4, 1/3]=\ov{\mathcal K},
\end{equation}
where $\mathcal K$ is given by (\ref{eq:K}).

Let $SW(a_1,\dots, a_n)$ denote the set of all standard words of length~$n$ given by some irrational $\ga$ whose continued fraction expansion begins with $[a_1+1,\dots, a_n]$. Thus, $s_{-1}=1, s_0=0, s_{k+1}=s_k^{a_{k+1}}s_{k-1}$ for all $k\in\{0,1,\dots, n-1\}$. Since for $a_n>1$, we have $[a_1,\dots, a_{n-1}, a_n]=[a_1,\dots, a_{n-1}, a_n-1, 1]$, we assume $a_n\ge2$.

Using this notation, we obtain
\[
(1/4, 1/3)\setminus \mathcal K = \bigcup_{p/q<1/2, (p,q)=1} [\alpha_{p/q}, \be_{p/q}],
\]
where $[\alpha_{p/q}, \be_{p/q}]$ is the gap between $\{\pi(01w) : w\in SW(a_1,\dots, a_{n-1}, a_n)\}$ and $\{\pi(01w): w\in SW(a_1,\dots, a_{n-1}, a_n-1)\}$.

\begin{example}\label{ex1}
Let $p/q=1/3$, which implies $n=1$ and $a_1=2$. Denote $s_1=001, s_1'=01$.

Then $s_2=(01)^{d_2}0$, and $s_3=((01)^{d_2}0)^{d_3}01$, which is always lexicographically larger than $(010)^\infty$ (corresponding to $d_2=1, d_3=\infty$).
Similarly, $s_2'=(001)^{d_2}0$, and the supremum corresponds to $d_2=\infty$, which yields $(001)^\infty$. Therefore,
\[
[\alpha_{1/3}, \be_{1/3}]=[\pi(01(001)^\infty),\pi(01(010)^\infty)]=\left[\frac27, \frac9{28}\right].
\]
Note that $01(001)^\infty=(010)^\infty$, i.e., purely periodic. Besides, $\alpha_{1/3}$ and $\be_{1/3}$ are points in the same periodic orbit. We will see later that this is always the case for $\alpha_{p/q}$.
\end{example}

\begin{example}\label{ex2}
Let now $n=2$ and $p/q=2/5=[2,2]$. Here $s_1=01, s_2=01010, s_2'=010$. We observe the role-reversal: it is now $[2,1]$ that yields $\alpha_{2/5}$ and $[2,2]$ yields $\be_{2/5}$. More precisely,
\begin{align*}
\sup\{s_N : a_1=1, a_2=1\}&=(01001)^\infty, \\
 \inf\{s_N : a_1=1, a_2=2\}&=(01010)^\infty,
\end{align*}
whence, in view of $01(01001)^\infty=(01010)^\infty$,
\[
[\alpha_{2/5}, \be_{2/5}]=[\pi((01010)^\infty),\pi(01(01010)^\infty)]= \left[\frac{10}{31},\frac{41}{124}\right].
\]
\end{example}

Before we tackle the general case, we need the following auxiliary claim:

\begin{lemma}\label{lem:sn1}
Let $s_{n-1}, s_n$ be standard words with $s_n=s_{n-1}^{a_n}s_{n-2}$. Then
\begin{align*}
s_{n-1}s_n&\prec s_ns_{n-1},\quad n\ \text{odd,}\\
s_ns_{n-1}&\prec s_{n-1}s_n,\quad n\ \text{even.}
\end{align*}
\end{lemma}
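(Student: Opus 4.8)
The plan is to prove both statements at once by induction on $n$, using the recurrence $s_n=s_{n-1}^{a_n}s_{n-2}$ to reduce the comparison at level $n$ to the one at level $n-1$, the point being that this reduction reverses the direction of the inequality and therefore matches the alternation between odd and even $n$.

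First I would rewrite the two words by expanding $s_n$. Since $s_n=s_{n-1}^{a_n}s_{n-2}$, we get $s_{n-1}s_n=s_{n-1}^{a_n+1}s_{n-2}$ and $s_ns_{n-1}=s_{n-1}^{a_n}s_{n-2}s_{n-1}$, which are finite words of the same length $q_{n-1}+q_n$ sharing the common prefix $s_{n-1}^{a_n}$. Stripping that prefix, the order of $s_{n-1}s_n$ against $s_ns_{n-1}$ is decided by the tails $s_{n-1}s_{n-2}$ and $s_{n-2}s_{n-1}$, i.e.
\[
s_{n-1}s_n\prec s_ns_{n-1}\iff s_{n-1}s_{n-2}\prec s_{n-2}s_{n-1}.
\]
For this equivalence to have content I must know the two tails differ, so that the first discrepancy falls within them; this holds because $s_{n-1}s_{n-2}$ and $s_{n-2}s_{n-1}$ agree except in their last two letters (the classical near-commutation of consecutive standard words, see \cite[Chapter~2]{Loth}), and in any event it is supplied by the strict inequality carried in the inductive hypothesis.

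The inductive step then just reads off the parity, noting that $s_{n-1}s_{n-2}$ and $s_{n-2}s_{n-1}$ are exactly the pair compared by the lemma at level $n-1$. If $n$ is odd then $n-1$ is even, so the (even) hypothesis gives $s_{n-1}s_{n-2}\prec s_{n-2}s_{n-1}$, whence $s_{n-1}s_n\prec s_ns_{n-1}$, the odd-$n$ claim. If $n$ is even then $n-1$ is odd, so the (odd) hypothesis gives $s_{n-2}s_{n-1}\prec s_{n-1}s_{n-2}$, whence $s_ns_{n-1}\prec s_{n-1}s_n$, the even-$n$ claim. Thus each application of the recurrence shifts the inequality down one index while reversing its sense, precisely mirroring the odd/even alternation.

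Finally I would settle the base case by direct computation: with $s_0=0$ and $s_1=0^{a_1}1$ one has $s_0s_1=0^{a_1+1}1\prec 0^{a_1}10=s_1s_0$, confirming the odd case at $n=1$ (equivalently the recursion bottoms out at $s_0s_{-1}=01\prec 10=s_{-1}s_0$, the even case at $n=0$). The only delicate point is the reduction itself, where one must be sure that, after removing the common prefix $s_{n-1}^{a_n}$, the order is genuinely determined by the finite tails; this is exactly where the distinctness of $s_{n-1}s_{n-2}$ and $s_{n-2}s_{n-1}$ enters, and it is the one structural fact about standard words invoked from outside the induction.
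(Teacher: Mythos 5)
Your proof is correct and is essentially the paper's own argument: the paper likewise proves both claims by simultaneous induction, writing $s_{n-1}s_n=s_{n-1}^{a_n}\,s_{n-1}s_{n-2}$ and $s_ns_{n-1}=s_{n-1}^{a_n}\,s_{n-2}s_{n-1}$ so that stripping the common prefix $s_{n-1}^{a_n}$ reduces the comparison to the level-$(n-1)$ statement, with the same base cases $s_0s_{-1}=01\prec 10$ and $s_0s_1=0^{a_1+1}1\prec 0^{a_1}10$. Your explicit remark that the tails $s_{n-1}s_{n-2}$ and $s_{n-2}s_{n-1}$ are genuinely distinct (so the first discrepancy lies in the tails) is a point the paper leaves implicit, but it does not change the route.
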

\begin{proof}We prove both claims simultaneously by induction. It is obvious that $s_0s_{-1}=01\prec10=s_{-1}s_0$ and $s_0s_1=0^{d_1+1}1\prec 0^{d_1}10=s_1s_0$. Assume the claim holds for all $k<n$. We have
\[
s_{n-1}s_n=s_{n-1}^{d_n+1}s_{n-2}= s_{n-1}^{d_n}s_{n-1}s_{n-2}.
\]
If $s_{n-1}s_{n-2}\prec (\text{resp.}\ \succ) s_{n-2}s_{n-1}$, this implies $s_{n-1}s_n\prec (\text{resp.}\ \succ) s_ns_{n-1}$.
\end{proof}

The following lemma gives an explicit formula for $[\alpha_{p/q}, \be_{p/q}]$ for a general $p/q$.

\begin{lemma}\label{lem:alphapq}
We have
\[
[\alpha_{p/q}, \be_{p/q}]=
\begin{cases}
[\pi(01 (s_n)^\infty), \pi(01(s_{n-1}^{a_n-1}s_{n-2}s_{n-1})^\infty)],& n\ \text{ odd},\\
[\pi(01(s_{n-1}^{a_n-1}s_{n-2}s_{n-1})^\infty), \pi(01 (s_n)^\infty)] ,& n\ \text{ even}.
\end{cases}
\]
\end{lemma}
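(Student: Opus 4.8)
The plan is to pass to lexicographic order and then read off the two endpoints as the extreme (supremum and infimum) words of the two continued-fraction families involved. Since $w\preceq w'\iff\pi(w)\le\pi(w')$ and $\pi$ is injective on the (non-dyadic) words in play, the gap between $\{\pi(01w):w\in SW(a_1,\dots,a_n)\}$ and $\{\pi(01w):w\in SW(a_1,\dots,a_n-1)\}$ is governed by the lexicographic suprema and infima of the standard words in each family. Both families are generated from the common words $s_{-1},\dots,s_{n-1}$ by the recursion $s_{k+1}=s_k^{a_{k+1}}s_{k-1}$; the first has $n$th standard word $s_n=s_{n-1}^{a_n}s_{n-2}$, while the second has $n$th standard word $\wt s_n:=s_{n-1}^{a_n-1}s_{n-2}$. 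Note that $s_n$ and $\wt s_ns_{n-1}=s_{n-1}^{a_n-1}s_{n-2}s_{n-1}$ both have length $q_n$, so that comparing $s_n^\infty$ with $(\wt s_ns_{n-1})^\infty$ reduces to comparing the two length-$q_n$ words themselves.

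First I would establish an \emph{extremes} lemma: over the family $SW(a_1,\dots,a_m)$ (equivalently, over all characteristic words whose first $m$ partial quotients are $[a_1+1,\dots,a_m]$), every word lies lexicographically between $s_m^\infty$ and $(s_ms_{m-1})^\infty$, with $s_m^\infty$ the maximum and $(s_ms_{m-1})^\infty$ the minimum when $m$ is odd, and the two interchanged when $m$ is even. Both bounds are attained in the limit: letting $a_{m+1}\to\infty$ sends $s_\infty\to s_m^\infty$, while taking $a_{m+1}=1$ and $a_{m+2}\to\infty$ sends $s_\infty\to(s_ms_{m-1})^\infty$. I would prove this by induction on $m$, comparing an arbitrary word with the candidate extremum symbol by symbol: after deleting the longest common prefix (a power of $s_m$ or of $s_ms_{m-1}$), the first disagreement is always of the form $s_{k-1}s_k$ versus $s_ks_{k-1}$, whose sign is supplied by Lemma~\ref{lem:sn1} and flips with the parity of $k$. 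This alternation is exactly what produces the parity dichotomy in the conclusion.

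Granting the extremes lemma, the proof concludes by a single comparison. Applied to the two families (both of depth $n$, sharing $s_{n-1}$) it confines the words of $SW(a_1,\dots,a_n)$ to the interval between $s_n^\infty$ and $(s_ns_{n-1})^\infty$, and those of $SW(a_1,\dots,a_n-1)$ to the interval between $\wt s_n^\infty$ and $(\wt s_ns_{n-1})^\infty=(s_{n-1}^{a_n-1}s_{n-2}s_{n-1})^\infty$. The two families are separated once we compare $s_n$ and $\wt s_ns_{n-1}$: they share the prefix $s_{n-1}^{a_n-1}$, so the comparison reduces to $s_{n-1}s_{n-2}$ versus $s_{n-2}s_{n-1}$, and Lemma~\ref{lem:sn1} (at index $n-1$) gives
\[
s_n^\infty\prec(s_{n-1}^{a_n-1}s_{n-2}s_{n-1})^\infty\quad(n\ \text{odd}),\qquad (s_{n-1}^{a_n-1}s_{n-2}s_{n-1})^\infty\prec s_n^\infty\quad(n\ \text{even}).
\]
For $n$ odd this places the first family entirely below the second, so the gap is $[\pi(01s_n^\infty),\pi(01(s_{n-1}^{a_n-1}s_{n-2}s_{n-1})^\infty)]$; for $n$ even the two families swap roles and the endpoints swap accordingly, which is precisely the stated dichotomy.

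The step I expect to be the real work is the extremes lemma, and within it the bookkeeping of the inductive symbol-by-symbol comparison—tracking how many leading copies of $s_m$ (or $s_{m-1}$) are shared, verifying that the first discrepancy is indeed an $s_{k-1}s_k$ versus $s_ks_{k-1}$ comparison, and checking that the parity recorded by Lemma~\ref{lem:sn1} propagates correctly down the recursion. By contrast, the final separation and the assignment of $\alpha_{p/q},\beta_{p/q}$ reduce to one application of Lemma~\ref{lem:sn1}, and passing from words back to the numbers $\alpha_{p/q},\beta_{p/q}$ is immediate from $w\preceq w'\iff\pi(w)\le\pi(w')$.
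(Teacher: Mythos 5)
Your proposal is correct and takes essentially the same route as the paper: the paper also reduces the lemma to computing the lexicographic suprema/infima of the two families, proving via Lemma~\ref{lem:sn1} that $\sup SW(a_1,\dots,a_n)=(s_n)^\infty$ for $n$ odd and $(s_ns_{n-1})^\infty$ for $n$ even (whence the sup of the $a_n-1$ family is $(s_{n-1}^{a_n-1}s_{n-2}s_{n-1})^\infty$), with the infima ``proved in exactly the same way.'' The only differences are presentational: the paper settles each extreme by a single direct comparison (any word in the family begins $s_n^{d_{n+1}}s_{n-1}s_n\cdots$, and one application of Lemma~\ref{lem:sn1} decides it against the candidate periodic word) rather than your induction on $m$, and it leaves implicit the two points you spell out explicitly --- attainment of the extremes in the limit and the separation of the two families via the common prefix $s_{n-1}^{a_n-1}$ --- both of which are correct refinements of the same argument.
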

\begin{proof}We show first that
\begin{equation}\label{eq:sup}
\sup\{SW(d_1,d_2,\dots) \mid d_1=a_1,\dots d_n=a_n\}=(s_n)^\infty,\quad n\ \text{odd},
\end{equation}
where supremum is understood in the sense of lexicographic order. We have $s_{n+1}=s_n^{d_{n+1}}s_{n-1}$, and $s_{n+2}= s_{n+1}^{d_{n+2}}s_n=s_n^{d_{n+1}}s_{n-1}s_n^{d_{n+1}}\ldots \prec (s_n)^\infty$, in view of Lemma~\ref{lem:sn1}. (Since $s_n$ begins with $s_{n-1}$, we have $s_{n-1}s_n\dots\prec s_ns_n\dots$). This proves (\ref{eq:sup}).

Similarly,
\begin{equation}\label{eq:sup2}
\sup\{SW(d_1,d_2,\dots) \mid d_1=a_1,\dots d_n=a_n\}=(s_ns_{n-1})^\infty,\quad n\ \text{even}.
\end{equation}
Indeed, by Lemma~\ref{lem:sn1}, we have $s_n^2\prec s_{n-1}s_n$. Hence $s_n^{d_{n+1}}s_{n-1}s_n^{d_{n+1}}\ldots \prec (s_ns_{n-1})^\infty$, which proves (\ref{eq:sup2}). Consequently,
\begin{align*}
\sup&\{SW(d_1,d_2,\dots) \mid d_1=a_1,\dots, d_{n-1}=a_{n-1}, d_n=a_n-1\}\\
&=(s_{n-1}^{a_n-1}s_{n-2}s_{n-1})^\infty,\ \ n\ \text{even}.
\end{align*}
The two remaining inequalities are proved in exactly the same way.
\end{proof}

By induction, $s_n$ always ends with $01$ if $n\ge1$ is odd and with $10$ otherwise. Hence $\alpha_{p/q}$ always has a purely periodic binary expansion of period $q$. Put $\ga_{p/q}=\be_{p/q}+1/4$. Clearly, the binary expansion of $\ga_{p/q}$ can be obtained from that of $\be_{p/q}$ by replacing the first two digits 0,1 with 1,0. Hence by the previous lemma, the binary expansion of $\ga_{p/q}$ is purely periodic of period $q$ as well. Thus, we have

\[
T^{q}(\alpha_{p/q})=\alpha_{p/q},\quad
T^{q}(\beta_{p/q})=T^{q}(\ga_{p/q})=\ga_{p/q}.
\]

Put
\[
\mathcal O(p/q)=\{T^n (\alpha_{p/q}) \mid n\in\BN\}.
\]

\begin{lemma}\label{lem:minmax}
We have $\mathcal O(p/q)\cap (\alpha_{p/q}, \ga_{p/q})=\varnothing$.
\end{lemma}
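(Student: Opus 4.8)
The plan is to pass to symbolic dynamics and read off $\mathcal O(p/q)$ as the set of $\pi$-values of the cyclic rotations of a single length-$q$ block. By Lemma~\ref{lem:alphapq} and the discussion preceding it, for $n$ odd we may write $\alpha_{p/q}=\pi(V^\infty)$ and $\ga_{p/q}=\pi(G^\infty)$, where $V=01u$ with $s_n=u\,01$, and $G=10v$ with $s_{n-1}^{a_n-1}s_{n-2}s_{n-1}=v\,10$. Both $V$ and $G$ have length $q$, and both are cyclic rotations of $s_n$: indeed $s_{n-1}^{a_n-1}s_{n-2}s_{n-1}$ is the rotation of $s_n=s_{n-1}^{a_n}s_{n-2}$ by $|s_{n-1}|$, and prepending $01$ (resp. $10$) to the corresponding periodic word accounts for $\alpha_{p/q}$ (resp. $\ga_{p/q}=\be_{p/q}+1/4$). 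Hence $\mathcal O(p/q)=\{\pi(\sigma^k(V^\infty)) : k\ge0\}$ is a single $q$-cycle containing both $\alpha_{p/q}$ and $\ga_{p/q}$. Since $\alpha_{p/q}<1/3<1/2<\ga_{p/q}$ and $1/2\notin\mathcal O(p/q)$, an orbit point whose value is below $1/2$ begins with $0$ and one whose value exceeds $1/2$ begins with $1$. Thus the lemma follows once we show
\begin{align*}
\text{(A)}\quad & \sigma^k(V^\infty)\preceq V^\infty \quad \text{whenever $\sigma^k(V^\infty)$ begins with $0$},\\
\text{(B)}\quad & \sigma^k(V^\infty)\succeq G^\infty \quad \text{whenever $\sigma^k(V^\infty)$ begins with $1$}.
\end{align*}
The case $n$ even is identical after interchanging the two formulas of Lemma~\ref{lem:alphapq}.

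Because $V^\infty$ begins with $01$ and $G^\infty$ begins with $10$, the rotations starting with $00$ satisfy $\sigma^k(V^\infty)\prec V^\infty$ for free (their $\pi$-value is $<1/4<\alpha_{p/q}$), and those starting with $11$ satisfy $\sigma^k(V^\infty)\succ G^\infty$ for free (value $>3/4>\ga_{p/q}$, using $\ga_{p/q}<1/3+1/4<3/4$). So (A) and (B) reduce to their genuine cases: (A$'$) every rotation beginning with $01$ is $\preceq V^\infty$, and (B$'$) every rotation beginning with $10$ is $\succeq G^\infty$. Equivalently, $\alpha_{p/q}$ and $\ga_{p/q}$ are exactly the $0$-$\max$ and the $1$-$\min$ of the periodic orbit $\mathcal O(p/q)$, in perfect analogy with the identities $0\text{-}\max(\ga)=\pi(01s_\infty(\ga))$ and $1\text{-}\min(\ga)=\pi(10s_\infty(\ga))$ used to locate the Sturmian hole $H_0(\ga)$ in the previous section.

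I would prove (A$'$) and (B$'$) by induction on $n$, exploiting the recursion $s_n=s_{n-1}^{a_n}s_{n-2}$. The base case $n=1$ is immediate, since $s_1=0^{a_1}1$ contains a single letter $1$: there is then exactly one rotation beginning with $01$ and exactly one beginning with $10$, namely $V^\infty$ and $G^\infty$ themselves, so (A$'$) and (B$'$) hold trivially. For the inductive step, a rotation beginning with $01$ starts at the final $0$ of a maximal block of $0$'s, and its continuation is a concatenation of the blocks $s_{n-1}$ and $s_{n-2}$ (at the bottom level, of $10^{d_1}$ and $10^{d_1+1}$, whose lengths differ by one, as recalled in the previous section). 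Comparing two such rotations, balancedness forces the first discrepancy to occur at a place where one block is one letter shorter than the other; the relative order of $s_{n-1}s_{n-2}$ and $s_{n-2}s_{n-1}$ is governed by the parity of $n$ via Lemma~\ref{lem:sn1}, and this reduces the comparison of rotations of $s_n$ to the comparison of the corresponding rotations one renormalization level down, where the inductive hypothesis applies (the parity alternation of Lemma~\ref{lem:sn1} precisely matching the swap between the $0$-$\max$ and $1$-$\min$ statements). The main obstacle is the bookkeeping in this step: making precise how the cyclic conjugates of $s_n$ that straddle the boundaries between the $s_{n-1}$- and $s_{n-2}$-blocks correspond, under renormalization, to conjugates of the shorter standard word, and verifying that this correspondence is order preserving up to the parity reversal. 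This is the periodic counterpart of the standard fact \cite[Chapter~2]{Loth} that the lexicographically extremal conjugates of a standard word with a prescribed first letter are obtained by cutting at its central (palindromic) word; once it is established, no conjugate can slip strictly between $V^\infty$ and $G^\infty$, which is exactly $\mathcal O(p/q)\cap(\alpha_{p/q},\ga_{p/q})=\varnothing$.
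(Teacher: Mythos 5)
Your reduction is correct and in fact mirrors the structure of the paper's own argument. You identify $\ga_{p/q}$ as a point of $\mathcal O(p/q)$ (your explicit observation that $s_{n-1}^{a_n-1}s_{n-2}s_{n-1}$ is the rotation of $s_n=s_{n-1}^{a_n}s_{n-2}$ by $|s_{n-1}|$ is a clean shortcut past the paper's argument, which instead deduces this from the fact that balanced words of equal length and equal weight are cyclic permutations of one another), and you correctly reduce the lemma to the claims (A$'$) and (B$'$): that $V^\infty$ is the lexicographically largest conjugate of $s_n$ beginning with $0$ and $G^\infty$ the smallest beginning with $1$. (Your dispatching of the $00$ and $11$ cases is fine; the $11$ case is in fact vacuous, since $11$ is never a factor of $s_n^\infty$ for $\ga<1/2$.) This is exactly where the paper stands after its first two paragraphs.

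At that point, however, the paper closes the argument by invoking a well-known extremal-conjugate property of standard words, citing \cite[Chapter~2]{Loth}: if $w=w_1\dots w_q$ is standard and ends in $01$, then $w_qw_1\dots w_{q-1}$ is the lexicographically largest cyclic permutation of $w$, hence $w_{q-1}w_qw_1\dots w_{q-2}$ is the largest one beginning with $0$, and symmetrically for the word ending in $10$. You instead attempt to prove this property by induction on $n$, and the inductive step is not actually carried out --- your own text concedes as much (``the main obstacle is the bookkeeping in this step''). The assertions that the first discrepancy between two conjugates must occur where one block is a letter shorter than the other, and that conjugates straddling block boundaries renormalize in an order-preserving way up to the parity reversal of Lemma~\ref{lem:sn1}, are precisely the content of the fact to be proved; as written they are claims, not arguments. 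So the proposal has a genuine gap at the heart of the lemma: (A$'$) and (B$'$) are never established. The gap is easily repaired --- these are standard facts about conjugates of standard words and can simply be cited, which is exactly what the paper does --- but your induction, as sketched, does not yet constitute a proof of them.
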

\begin{proof}Since $\ga_{p/q}$ is purely periodic, and the binary expansion of the periodic part has the length $q$ and the same number of 1s as that of $\alpha_{p/q}$ (i.e., $p$), we conclude that $\ga_{p/q}\in\mathcal O(p/q)$, in view of the fact that any two balanced words with the same length and the same number of 1s are cyclic permutations of each other -- see \cite[Chapter~2]{Loth}.

Assume first $n$ to be odd; we have
\[
\alpha_{p/q}=\pi(01(s_n)^\infty)=\pi((w_1...w_{q})^\infty). \]
It is well known that if $w=w_1\dots w_q$ is standard and ends with 01, then its cyclic permutation $w_qw_1\dots w_{q-1}$ is lexicographically largest among all cyclic permutations of $w$. Consequently, $w_{q-1}w_qw_1\dots w_{q-2}$ is the largest cyclic permutation beginning with 0.

The word $s_{n-1}^{a_n-1}s_{n-2}s_{n-1}$ has the same length and the same number of 1s as $s_n$ (and is balanced, since it is standard), whence it is also a cyclic permutation of $s_n$. By the same argument, adding the prefix 10 to $w_1'\dots w_q':=s_{n-1}^{a_n-1}s_{n-2}s_{n-1}$ and removing its last two symbols yields the lexicographically smallest cyclic permutation $10w'_1\dots w'_{q-2}$ of $s_n$ beginning with 1.

Thus, there is no cyclic permutation which is lexicographically greater than $01w_1\dots w_{q-2}$ and less than $10w'_1\dots w'_{q-2}$. This implies that there is no element of $\mathcal O(p/q)$ between $\alpha_{p/q}$ and $\ga_{p/q}$.

The case of even $n$ is analogous.
\end{proof}

\begin{cor}\label{cor:trap}
For each $p/q$ we have
\[
\mathcal J(\alpha_{p/q}, \ga_{p/q})=\mathcal O(p/q) \cup \bigcup_{k\ge0} T^{-k}(\{\alpha_{p/q}, \ga_{p/q}\}).
\]
\end{cor}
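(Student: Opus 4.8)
Looking at Corollary~\ref{cor:trap}, I need to show that the set of points whose $T$-orbit avoids the open hole $(\alpha_{p/q}, \ga_{p/q})$ equals exactly the orbit $\mathcal O(p/q)$ together with all backward preimages of the two endpoints. The key structural input is that $\alpha_{p/q}$ and $\ga_{p/q}$ are the two endpoints of a gap, both lying on the same periodic orbit $\mathcal O(p/q)$, and that this interval has length exactly $1/4$ (matching the Sturmian hole structure).

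\begin{proof}[Proof plan]
The plan is to show that $[\alpha_{p/q}, \ga_{p/q}]$ is a \emph{trap} in the sense defined earlier, so that $\bigcup_{k\ge0} T^{-k}([\alpha_{p/q},\ga_{p/q}]) = (0,1)$. First I would verify that this closed interval has length exactly $1/4$: since $\ga_{p/q} = \be_{p/q} + 1/4$ and $\be_{p/q}$ is the right endpoint of the gap while $\alpha_{p/q}$ is its left endpoint, the interval $[\alpha_{p/q},\ga_{p/q}]$ is the translate by $1/4$ relevant to the Sturmian construction. I would then invoke Lemma~\ref{lem:trap} — or rather its method of proof — applied to the periodic words $01(s_n)^\infty$ and $10(s_n)^\infty$, to conclude that $[\alpha_{p/q},\ga_{p/q}]$ is indeed a trap. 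The trap property immediately gives that every point of $(0,1)$ eventually maps into $[\alpha_{p/q},\ga_{p/q}]$ under some iterate of $T$.

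Given that $[\alpha_{p/q},\ga_{p/q}]$ is a trap, a point $x$ whose forward orbit avoids the \emph{open} interval $(\alpha_{p/q},\ga_{p/q})$ must have the property that every iterate $T^k x$ that lands in the closed trap in fact lands on one of its two endpoints. More precisely, I would argue as follows: if $x \in \mathcal J(\alpha_{p/q},\ga_{p/q})$, then since the trap is visited by the orbit, some $T^k x \in \{\alpha_{p/q}, \ga_{p/q}\}$, which places $x$ in $\bigcup_{k\ge0} T^{-k}(\{\alpha_{p/q},\ga_{p/q}\})$. Conversely, every preimage of an endpoint survives provided the endpoints themselves survive, and the endpoints survive precisely by Lemma~\ref{lem:minmax}, which guarantees $\mathcal O(p/q) \cap (\alpha_{p/q},\ga_{p/q}) = \varnothing$. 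Thus the forward orbit $\mathcal O(p/q)$ lies entirely outside the open hole and is contained in $\mathcal J(\alpha_{p/q},\ga_{p/q})$, giving the reverse inclusion together with the backward preimages.

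The two inclusions then combine to the stated equality. For the inclusion $\supseteq$, I note that $\mathcal O(p/q) \subseteq \mathcal J(\alpha_{p/q},\ga_{p/q})$ by Lemma~\ref{lem:minmax}, and that any backward preimage of an endpoint whose orbit avoids the hole also belongs to $\mathcal J$; care is needed here to confirm that such preimages do not accidentally pass through the open hole before reaching the endpoint, but since landing on an endpoint and then following the periodic orbit keeps the trajectory on $\mathcal O(p/q)$, this is automatic. For the inclusion $\subseteq$, the trap argument shows every surviving point must hit an endpoint. The main obstacle I anticipate is the careful bookkeeping in establishing the trap property for general $p/q$: I must confirm that the periodic endpoints $01(s_n)^\infty$ and $10(s_n)^\infty$ (for odd $n$, with the roles adjusted for even $n$ per Lemma~\ref{lem:alphapq}) genuinely play the role of $s(r)^\infty$ and $t(r)^\infty$ in Lemma~\ref{lem:trap}, so that the lemma applies verbatim. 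Matching the explicit formulas of Lemma~\ref{lem:alphapq} to the hypotheses of Lemma~\ref{lem:trap}, and checking the parity cases, is where the real work lies.
\end{proof}
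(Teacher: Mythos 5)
Your skeleton is the paper's own: the published proof is the single sentence that the claim follows from Lemma~\ref{lem:minmax} together with Lemma~\ref{lem:trap}, and your outline (a trap forces every orbit into the closed interval; avoidance of the open hole then forces landing on an endpoint; Lemma~\ref{lem:minmax} puts $\mathcal O(p/q)$ inside $\mathcal J$) is exactly that argument made explicit. However, two of your supporting assertions are false. The harmless one first: $[\alpha_{p/q},\ga_{p/q}]$ does \emph{not} have length $1/4$; since $\ga_{p/q}=\be_{p/q}+1/4$ and $\be_{p/q}>\alpha_{p/q}$, its length is $1/4+(\be_{p/q}-\alpha_{p/q})>1/4$ (for $p/q=1/3$ it is $4/7-2/7=2/7$) --- only the Sturmian holes $H_0(\ga)$ have length exactly $1/4$; fortunately the length plays no role in the argument. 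The serious one is your disposal of the $\supseteq$ direction: the claim that a preimage of an endpoint ``automatically'' avoids the hole before reaching that endpoint is wrong, and wrong at a point the paper itself exhibits, namely $\be_{p/q}$: it satisfies $T^q(\be_{p/q})=\ga_{p/q}$ yet lies inside $(\alpha_{p/q},\ga_{p/q})$; concretely, $9/28\in T^{-3}(\{4/7\})$ while $9/28\notin\mathcal J(2/7,4/7)$. Your justification controls the orbit only \emph{after} it lands on the endpoint, not before. (In fairness, this looseness sits in the corollary as printed --- its right-hand side literally contains $\be_{p/q}$, and also misses the fixed point $0\in\mathcal J$ --- and only the inclusion $\subseteq$ is used later in the paper; but ``this is automatic'' is an incorrect argument, not a reading that repairs the statement.)

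On the $\subseteq$ step you correctly flag the matching of Lemma~\ref{lem:alphapq} to Lemma~\ref{lem:trap} as the real work, but be aware the lemma cannot apply ``verbatim'': its trap is bounded by the purely periodic points $\pi((01s_n)^\infty)$ and $\pi((10s_n)^\infty)$, which are \emph{not} $\alpha_{p/q}=\pi(01(s_n)^\infty)$ and $\ga_{p/q}$ (for $p/q=1/3$, $a_1=2$, one has $\pi((01s_1)^\infty)=9/31\neq 2/7$), and the resulting trap need not sit inside $[\alpha_{p/q},\ga_{p/q}]$: taking $1/3=[2,1]$ the lemma yields the trap $[10/31,18/31]$, and $18/31>4/7=\ga_{1/3}$, so the trap overhangs the interval on the right. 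Hence ``every surviving orbit must hit an endpoint'' does not follow from trap containment alone: a surviving orbit could a priori enter the trap in the overhang $(\ga_{p/q},18/31]$ without touching $\{\alpha_{p/q},\ga_{p/q}\}$. One needs a supplementary argument for overhang points --- e.g., that a point just to the right of $\ga_{p/q}$ shadows the cycle and within at most $q$ further steps lands just to the right of $\alpha_{p/q}$, i.e.\ inside the hole (in the example, $x\in(4/7,18/31]$ gives $T^2x=2/7+4\left(x-4/7\right)\in(\alpha_{1/3},\ga_{1/3})$; this is the mechanism of Figure~\ref{fig0} in the subsequent proposition). So: right two lemmas, same route as the paper, but your $\supseteq$ justification fails outright, and the decisive $\subseteq$ step is left at precisely the point where the verbatim application breaks down.
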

\begin{proof}The claim follows from Lemma~\ref{lem:minmax} together with Lemma~\ref{lem:trap}.
\end{proof}

\begin{prop}For any $p/q$ we have
\[
(\alpha_{p/q}, \be_{p/q})\cap \mathcal S=\varnothing.
\]
\end{prop}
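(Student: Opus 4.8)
The plan is to assume that $(a,b)$ is supercritical for some $a\in(\al_{p/q},\be_{p/q})$ and some $b>a$, and to derive a contradiction by playing condition (i) of Definition~\ref{def:super} against condition (ii). Throughout I keep the notation $\ga_{p/q}=\be_{p/q}+1/4$ and use the fact, recorded in Lemma~\ref{lem:minmax}, that the periodic orbit $\mathcal O(p/q)$ is disjoint from the open interval $(\al_{p/q},\ga_{p/q})$, with $\al_{p/q}$ its largest point beginning with $0$ and $\ga_{p/q}$ its smallest point beginning with $1$. Since $\mbox{Fix}(T)=\{0\}$, the hole $(a,b)$ is supercritical exactly when every strict enlargement traps all nonzero points while every strict sub-hole leaves a set of positive Hausdorff dimension.

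First I would show that condition (i) forces $b\ge\ga_{p/q}$. Suppose instead that $b<\ga_{p/q}$. Since $a>\al_{p/q}$ and $b<\ga_{p/q}$, I may choose an enlargement $H=(a',b')$ with $\al_{p/q}<a'<a$ and $b<b'<\ga_{p/q}$, so that $\ov{(a,b)}\subset H\subset(\al_{p/q},\ga_{p/q})$. By Lemma~\ref{lem:minmax} the orbit $\mathcal O(p/q)$ avoids $(\al_{p/q},\ga_{p/q})$, hence avoids $H$, so $\mathcal O(p/q)\subset\mathcal J(H)$. As $\mathcal O(p/q)$ consists of nonzero points, this contradicts (i); therefore $b\ge\ga_{p/q}$.

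Next I would use condition (ii) to eliminate every $b\ge\ga_{p/q}$, the engine being Corollary~\ref{cor:trap}, which makes $\mathcal J(\al_{p/q},\ga_{p/q})$ countable. The key step is to upgrade this to: $\mathcal J(a',\ga_{p/q})$ is countable for every $a'\in(\al_{p/q},\be_{p/q})$. Moving the left endpoint from $\al_{p/q}$ to $a'$ only frees the interval $(\al_{p/q},a']$, which lies in the gap; since $\ov{(\al_{p/q},\ga_{p/q})}$ is a trap, any orbit visiting $(\al_{p/q},a']$ must re-enter $[\al_{p/q},\ga_{p/q}]$ and, to avoid $(a',\ga_{p/q})$, must land in $[\al_{p/q},a']\cup\{\ga_{p/q}\}$. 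A first-return analysis on $[\al_{p/q},a']$ shows that the only survivors never entering $\ga_{p/q}$ converge to $0$, so the newly admitted points are merely preimages of $\mathcal O(p/q)$ and $\mathcal J(a',\ga_{p/q})$ stays countable. Granting this, fix $a'\in(a,\be_{p/q})$: when $b>\ga_{p/q}$, the sub-hole $H=(a',d)$ with $\ga_{p/q}<d<b$ satisfies $\ov H\subset(a,b)$ and $H\supset(a',\ga_{p/q})$, so $\mathcal J(H)\subset\mathcal J(a',\ga_{p/q})$ is countable and $\dim_H\mathcal J(H)=0$, contradicting (ii).

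The remaining case $b=\ga_{p/q}$ is the one I expect to be the main obstacle: here every admissible sub-hole has right endpoint strictly below $\ga_{p/q}$, so the countable set $\mathcal J(a',\ga_{p/q})$ can no longer serve as an upper bound (its right endpoint is exactly the critical orbit point $\ga_{p/q}$). The plan is to sharpen the return-map computation above to show that $\mathcal J(a',d)$ remains countable for all $d$ in a left-neighbourhood of $\ga_{p/q}$: the point $\ga_{p/q}$ acts as a bottleneck through which every surviving orbit is funnelled, and a hole with left endpoint in the gap and right endpoint close to $\ga_{p/q}$ inherits this funnelling, so its survivor set has zero entropy. Choosing such a $d<\ga_{p/q}$ together with $a'\in(a,\be_{p/q})$ then yields a sub-hole $(a',d)$ with $\ov{(a',d)}\subset(a,\ga_{p/q})$ and $\dim_H\mathcal J(a',d)=0$, once more contradicting (ii). Combining the two cases, no $b>a$ makes $(a,b)$ supercritical, whence $a\notin\mathcal S$, as required.
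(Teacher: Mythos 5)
Your opening move (condition (i) of Definition~\ref{def:super} forces $b\ge\ga_{p/q}$, via Lemma~\ref{lem:minmax} and the surviving orbit $\mathcal O(p/q)$) is exactly the paper's first step, and your overall strategy -- contradict condition (ii) by exhibiting a sub-hole with countable survivor set, anchored on Corollary~\ref{cor:trap} -- is also the paper's. But there is a genuine gap at precisely the point you yourself flag as ``the main obstacle'': the claim that $\mathcal J(a',d)$ remains countable for all $d$ in some left-neighbourhood of $\ga_{p/q}$ is asserted with metaphors (``bottleneck'', ``funnelling'') rather than proved, and this claim \emph{is} the content of the proposition. Note that the neighbourhood genuinely needs a quantified lower edge: once $[a',d]\subset(\al_{p/q},\al_{p/q}+1/4)$, the hole $(a',d)$ sits strictly inside the supercritical hole $(\al_{p/q},\al_{p/q}+1/4)$ and $\mathcal J(a',d)$ has \emph{positive} Hausdorff dimension, so an unquantified ``$d$ close to $\ga_{p/q}$'' cannot simply be waved through -- one must locate where countability stops. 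The paper does this by an explicit two-branch analysis of $T^{q}$ on $[\al_{p/q},\ga_{p/q}]$ (its equation (\ref{eq:trap}) and Figure~\ref{fig0}): the left branch fixes the repelling point $\al_{p/q}$, so with $a':=T^{kq}(a)\in(\be_{p/q},\ga_{p/q})$ every point of $(\al_{p/q},a)$ is pushed into $(a,a']$ after finitely many $T^{q}$-steps; the right branch fixes the repelling point $\ga_{p/q}$ and sends $\ga'_{p/q}$ to $\be_{p/q}$, so every point of $(\max\{a',\ga'_{p/q}\},\ga_{p/q})$ drifts leftwards under $T^{q}$, staying above $\be_{p/q}$, until it falls into the hole. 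This yields $\mathcal J(a,d)=\mathcal O(p/q)$ for \emph{every} $d\in(\max\{a',\ga'_{p/q}\},\ga_{p/q}]$, which disposes of your two cases $b>\ga_{p/q}$ and $b=\ga_{p/q}$ uniformly (your case split is an artifact of first proving the claim only for right endpoint exactly $\ga_{p/q}$). Some such computation, or its symbolic equivalent, is indispensable; without it your proof is incomplete at its decisive step.

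A secondary inaccuracy: in your ``upgrade'' step you say the survivors in the freed interval $(\al_{p/q},a']$ ``converge to $0$''. This is false as stated: $0\notin[\al_{p/q},\ga_{p/q}]$, and the fixed point of the relevant return map $T^{q}$ on that interval is $\al_{p/q}$, which is repelling. The correct statement is that the only point of $[\al_{p/q},a']$ whose $T^{q}$-orbit never leaves it is $\al_{p/q}$ itself, so the newly admitted survivors are just points of $\mathcal O(p/q)$ together with countably many preimages of the endpoints. The countability conclusion you draw is right, and this expansion-off-a-repelling-fixed-point argument is repairable -- indeed, carried out on \emph{both} branches with the explicit threshold $\max\{a',\ga'_{p/q}\}$, it is exactly what the paper uses to finish.
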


\begin{proof}Let $a\in (\alpha_{p/q}, \be_{p/q})$ and assume, on the contrary, that there exists $b$ such that $(a,b)$ is supercritical. Note first that $b$ cannot be less than $\ga_{p/q}$, otherwise Lemma~\ref{lem:minmax} implies that there exists $\de>0$ such that $\mathcal O(p/q)\subset \mathcal J(a-\de,b+\de)$. Thus, $b\ge \ga_{p/q}$.

We claim that the condition of Definition~\ref{def:super}~(ii) fails here. Namely, since $T^{q}(\alpha_{p/q})=\alpha_{p/q}, \ T^{q}(\be_{p/q})=\ga_{p/q}$, we have that there exists a unique  $k\ge1$ such that $a':=T^{kq}(a)\in (\be_{p/q},\ga_{p/q})$. Let $\ga'(p/q)$ be such that $T^{q}(\ga'_{p/q})=\be_{p/q}$.

\begin{figure}[t]
\centering \unitlength=1.3mm
\begin{picture}(40,70)(0,0)

\thinlines

\path(-10,5)(50,5)(50,65)(-10,65)(-10,5)

\dottedline(-10,5)(50,65)
\dottedline(10,5)(10,65)

\put(-12,2){$\alpha_{p/q}$}
\put(9,2){$\be_{p/q}$}
\put(48,2){$\ga_{p/q}$}

\thicklines

\path(-10,5)(10,65)
\path(30,5)(50,65)

\thinlines

\dottedline(10,25)(36.5,25)
\dottedline(36.5,25)(36.5,5)

\put(34,2){$\ga'_{p/q}$}

\dottedline(8,5)(8,58)
\dottedline(8,58)(43,58)
\dottedline(43,58)(43,5)

\put(41.6,2){$a'$}

\put(7,2){$a$}

\put(22,2){$\alpha_{p/q}+\frac14$}

\end{picture}

\caption{Part of the map $T^{q}$ restricted to the interval $[\alpha_{p/q}, \ga_{p/q}]$}
    \label{fig0}
  \end{figure}
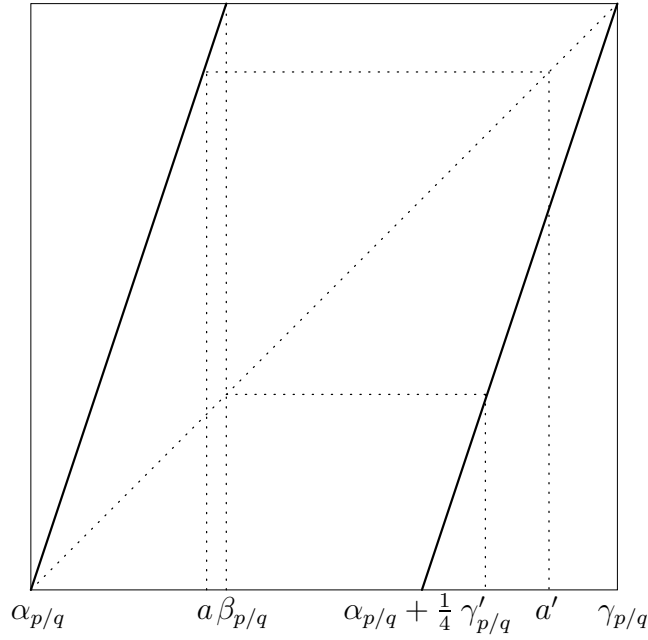

Then

\begin{equation}\label{eq:trap}
\mathcal J(a,b)=\mathcal O(p/q)\quad \forall b\in (\max\{a',\ga'(p/q)\},\ga_{p/q}].
\end{equation}
 Indeed, by Corollary~\ref{cor:trap}, it suffices to check  that for any $x\in (\alpha_{p/q},a)\cup (\max\{a',\ga'(p/q)\},\ga_{p/q})$ there exists $s\in\BN$ such that $T^s(x)\in (a,b)$. If $x\in (\alpha_{p/q},a)$, then we put $s=kq$; if $x\in (\max\{a',\ga'(p/q)\},\ga_{p/q})$, then we put $s=q$ -- see Figure~\ref{fig0}.\footnote{There may be pieces of the map $T^{q}$ between $\be_{p/q}$ and $\alpha_{p/q}+1/4$ which lie in the square in question but they do not affect our argument, so we ignore them.} This proves (\ref{eq:trap}).

Thus, our assumption was wrong, which proves the proposition.
\end{proof}

The only points to study which remain are the $\alpha_{p/q}, \be_{p/q}$ and $1/3$. We can use the following simple claim:

\begin{lemma}Suppose $a_j\to a,\ b_j\to b$ as $j\to+\infty$, and that each hole $(a_j, b_j)$ is supercritical for $T$. Then $(a,b)$ is supercritical as well.
\end{lemma}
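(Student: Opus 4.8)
The plan is to prove this closure (continuity) statement directly from the definitions of the two conditions making up supercriticality, exploiting the monotonicity of $\mathcal J$ with respect to hole inclusion. Recall that $\mathcal J$ is order-reversing: if $H\subset H'$ then $\mathcal J(H)\supset \mathcal J(H')$. The key observation is that supercriticality, as a property of a hole $H_0$, involves only holes \emph{strictly} containing $\overline{H_0}$ (condition (i)) or holes whose closure is \emph{strictly} contained in $H_0$ (condition (ii)). This open/strict formulation is exactly what makes a limiting argument go through, because an arbitrary hole $H$ that strictly sandwiches the limit $(a,b)$ will, for $j$ large, strictly sandwich the approximating holes $(a_j,b_j)$ as well.

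First I would verify condition (i) for $(a,b)$. Suppose $\overline{(a,b)}=[a,b]\subset H$ for some hole $H=(c,d)$, so $c<a$ and $d>b$. Since $a_j\to a$ and $b_j\to b$, for all sufficiently large $j$ we have $c<a_j$ and $b_j<d$, hence $[a_j,b_j]=\overline{(a_j,b_j)}\subset (c,d)=H$. Fix such a $j$. Because $(a_j,b_j)$ is supercritical and $\overline{(a_j,b_j)}\subset H$, condition (i) for $(a_j,b_j)$ gives $\mathcal J(H)\subset \mathrm{Fix}(T)$, which is precisely condition (i) for $(a,b)$.

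Next I would verify condition (ii) for $(a,b)$. Suppose $\overline H\subset (a,b)$, where $H=(c,d)$, so $a<c\le d<b$ with $a<c$ and $d<b$ strict. Since $a_j\to a$ from one side and $b_j\to b$, for all $j$ large enough we have $a_j<c$ and $d<b_j$, hence $\overline H=[c,d]\subset (a_j,b_j)$. Fix such a $j$. Condition (ii) for the supercritical hole $(a_j,b_j)$ then yields $\dim_H(\mathcal J(H))>0$, which is exactly condition (ii) for $(a,b)$. Combining the two parts shows $(a,b)$ is supercritical.

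The only subtlety, and the step I would watch most carefully, is the interplay between the direction of convergence of $a_j,b_j$ and the strictness of the inequalities. The argument as written requires, for a given sandwiching $H$, that the strict inequalities $c<a_j$ (resp.\ $a_j<c$) hold for all large $j$; this is immediate from $a_j\to a$ together with the \emph{strict} hypotheses $c<a$ (resp.\ $a<c$) coming from $\overline{H_0}\subset H$ or $\overline H\subset H_0$, regardless of whether the $a_j$ approach $a$ from the left, the right, or oscillate. So there is no genuine obstacle provided one reads Definition~\ref{def:super} with the open conditions it actually states; the claim is essentially a formal consequence of the monotonicity of $\mathcal J$ and the openness of the sandwiching constraints, and the main point is simply to keep track of which inequalities must be strict.
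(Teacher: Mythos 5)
Your proof is correct and is exactly the argument the paper has in mind: the paper's proof is the single line ``Follows directly from Definition~\ref{def:super},'' and what you have done is spell out that direct verification, correctly noting that the strict sandwiching conditions $\ov{H_0}\subset H$ and $\ov H\subset H_0$ are open constraints and hence inherited by $(a_j,b_j)$ for large $j$. Nothing further is needed.
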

\begin{proof}Follows directly from Definition~\ref{def:super}.
\end{proof}

Now, we know that both $\alpha_{p/q}$ and $\be_{p/q}$ are limit points of the set $\mathcal K$, which implies $(\alpha_{p/q}, \alpha_{p/q}+1/4)$ and $(\be_{p/q}, \be_{p/q}+1/4)$ are supercritical. Consequently,  $\alpha_{p/q}\in\mathcal S$ and $\be_{p/q}\in\mathcal S$ for all $p/q$.

\begin{example}It follows from Examples~\ref{ex1} and \ref{ex2} that the holes $\left(\frac27, \frac{15}{28}\right), \left(\frac9{28}, \frac47\right), \left(\frac{10}{31},\frac{71}{124}\right)$ and $\left(\frac{41}{124},\frac{18}{31}\right)$ are all supercritical.
\end{example}

Finally, $1/3=\pi((01)^\infty)$, which is $\lim_{N\to\infty}\pi(01 s_\infty(\ga_N))$, where $\ga_N=[2,N,1,1,1,\dots]$.

Therefore,
\[
\mathcal S\cap [1/4, 1/3]=\{1/4\}\cup\mathcal K \cup\bigcup_{p/q<1/2, (p,q)=1}\{\alpha_{p/q}, \be_{p/q}\}\cup\{1/3\}.
\]
Note that
\[
\ov{\mathcal K}=\{1/4\}\cup \mathcal K \cup\bigcup_{p/q<1/2, (p,q)=1}\{\alpha_{p/q}, \be_{p/q}\},
\]
which yields (\ref{eq:S14}).

It follows from Proposition~\ref{prop:degenerate} that $(1/2,b)$ is supercritical for $T$ for any $b\in[3/4,1]$. The following proposition asserts that if $a\in\mathcal S\setminus\{1/2\}$, then the right endpoint of the corresponding supercritical hole cannot ``float'':

\begin{prop}\label{prop:setS}
If $a\in\mathcal S\setminus\{[0,1/4]\cup\{1/2\}\}$ and $(a,b)$ is supercritical for $T$, then $b=a+1/4$. If $a\in[0,1/4]$ and $(a,b)$ is supercritical for $T$, then $b=1/2$.
\end{prop}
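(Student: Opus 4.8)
The plan is to handle the two assertions of Proposition~\ref{prop:setS} separately, proving in each case that the right endpoint is forced to a single value. The overall strategy is the same: assume $(a,b)$ is supercritical and show that if $b$ were any larger than the claimed value, then condition~(i) of Definition~\ref{def:super} would fail (the hole would leave a positive-dimensional survivor set after enlargement), while if $b$ were any smaller, then condition~(ii) would fail (shrinking $H_0=(a,b)$ would leave only fixed points). In other words, I would pin down $b$ by squeezing it between these two failure regimes.

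For the case $a\in[0,1/4]$, I would argue that $b=1/2$ is forced. That $b\ge 1/2$ is clear: by Proposition~\ref{prop:degenerate} the hole $(a,1/2)$ is already supercritical, and by the monotonicity built into $\mathcal J$ (namely $H\subset H'\implies \mathcal J(H')\subset\mathcal J(H)$) any hole $(a,b)$ with $b<1/2$ is contained in $(a,1/2)$, so $\mathcal J(a,b)\supset \mathcal J(a,1/2)$; but the proof of Proposition~\ref{prop:degenerate}(i) shows that enlarging $(a,1/2)$ to include $[1/4,1/2]$ already kills everything except $0$, so any hole strictly inside $(a,1/2)$ must admit a positive-dimensional survivor set — hence condition~(ii) holds for $(a,1/2)$ but would fail for any $(a,b)$ with $b<1/2$ for which we try to check the \emph{first} property. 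The more delicate direction is $b\le 1/2$: I would show that if $b>1/2$, then condition~(i) of supercriticality fails, because the hole $(a,b)$ with $b>1/2$ can be slightly enlarged on the \emph{left} (and top) while still leaving a whole subshift of sequences that avoid it — concretely, sequences all of whose $0$'s are followed by long runs of $1$'s, exactly as in the $\Sigma_n$ construction of Proposition~\ref{prop:degenerate}(ii); since such a positive-entropy subshift survives, the enlarged hole cannot force $\mathcal J\subset\mathrm{Fix}(T)$, contradicting~(i). So $b=1/2$.

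For the main case $a\in\mathcal S\cap(1/4,1/3]\setminus\{1/2\}$, I would use the complete description~(\ref{eq:S14}) just established: such an $a$ is either a point of $\mathcal K$, i.e.\ $a=\min H_0(\ga)$ for some irrational $\ga$, or one of the boundary points $\alpha_{p/q},\be_{p/q}$ or $1/3$. In the Sturmian case $a=\pi(01 s_\infty(\ga))$, Theorem~\ref{thm:doubling} shows $(a,a+1/4)=H_0(\ga)$ is supercritical, and I would show no other $b$ works: by~(\ref{eq:Kgamma}), $\mathcal J(H_0(\ga))=K_\ga\cup\{0\}$, and $K_\ga$ is perfectly pinned by the trap property ($\ov{H_0(\ga)}$ is a trap of length exactly $1/4$) together with minimality of $T|_{K_\ga}$. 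If $b<a+1/4$, then $1$-$\min(\ga)=\pi(10 s_\infty(\ga))=a+1/4$ lies in $\mathcal J(a,b)$, and by minimality its whole orbit is dense in $K_\ga$; shrinking the hole below $H_0(\ga)$ then leaves this zero-dimensional but infinite $K_\ga$, so condition~(ii), which demands \emph{positive} dimension for all holes $\ov H\subset(a,b)$, forces us up to $b=a+1/4$, past which the trap breaks. Conversely if $b>a+1/4$, the interval $(a,b)$ strictly contains the trap $\ov{H_0(\ga)}$, so enlarging it slightly still produces a proper trap and one checks as in Theorem~\ref{thm:doubling}(i) that $\mathcal J$ collapses to $\{0\}$ — but then condition~(ii) fails because even the sub-hole $H_0(\ga)$ (whose closure is inside $(a,b)$) yields the zero-dimensional $K_\ga$ rather than a positive-dimensional set. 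The boundary points $\alpha_{p/q},\be_{p/q},1/3$ are handled by the limiting Lemma: they are limits of Sturmian endpoints $\min H_0(\ga_j)$ with $b_j=a_j+1/4\to a+1/4$, and the lemma on limits of supercritical holes forces $b=a+1/4$ there too.

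The step I expect to be the main obstacle is establishing $b\le a+1/4$ (equivalently, ruling out $b>a+1/4$) \emph{without} circularity. The hazard is that for $b$ slightly above $a+1/4$ the set $\mathcal J(a,b)$ might still be merely $\{0\}$, in which case condition~(i) alone does not distinguish $b=a+1/4$ from larger $b$; the discriminating condition is~(ii), and making it bite requires exhibiting, for the sub-hole whose closure sits inside $(a,b)$ but which is at least as large as $H_0(\ga)$, a survivor set of strictly positive dimension — yet $K_\ga$ itself has dimension zero. The resolution is that for $\ov H\subset (a,b)$ with $b>a+1/4$ one has room to take $H\subsetneq H_0(\ga)$ strictly, and then the $X_{\ga,n}$ subshift approximation of Theorem~\ref{thm:doubling}(ii) supplies the required positive-dimensional piece; so condition~(ii) is in fact \emph{satisfied}, and the genuine contradiction for $b>a+1/4$ must instead come from condition~(i) failing to shrink $\mathcal J$ all the way to a subset of $\mathrm{Fix}(T)$. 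I would therefore be careful to separate which of the two defining conditions does the work on each side of $a+1/4$, and I expect the bookkeeping of exactly where the trap $\ov{H_0(\ga)}$ sits relative to $(a,b)$ — using that $|H_0(\ga)|=1/4$ exactly — to be the crux that nails $b=a+1/4$.
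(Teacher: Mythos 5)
Your proposal repeatedly misidentifies \emph{which} condition of Definition~\ref{def:super} fails on each side of the forced value of $b$, and the concrete claims backing this up are false. For $a\in[0,1/4]$ and $b>1/2$ you assert that condition~(i) fails because a $\Sigma_n$-type subshift survives a slightly enlarged hole; it does not: $\pi(\Sigma_n)$ contains points such as $\pi((01^n)^\infty)\in(1/4,1/2)\subset(a,b)$, and in fact any hole containing $(1/4,1/2]$ has survivor set $\{0\}$ (an expansion containing the factor $01$ eventually lands in $[1/4,1/2]$, and the remaining sequences $1^j0^\infty$ hit $1/2$), so condition~(i) \emph{holds} and the contradiction must come from condition~(ii): the shrunken hole still contains $[1/4,1/2]$ when $a<1/4$, with the one-step computation $T(1/4+\de)=1/2+2\de$ handling $a=1/4$ (the paper's case~(a)). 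Similarly, for Sturmian $a$ and $b<a+1/4$ you claim (ii) fails because shrunken holes leave only the zero-dimensional $K_\ga$; but every $H$ with $\ov H\subset(a,b)\subset H_0(\ga)$ satisfies $\dim_H\mathcal J(H)>0$ by Theorem~\ref{thm:doubling}(ii) (via $\pi(X_{\ga,n})$), so (ii) holds here, and moreover $K_\ga$ does not even survive an enlarged hole $(a-\e,b+\e)$: since $K_\ga$ is perfect, $a$ is a limit of $K_\ga$ from the left, and by minimality of $T|_{K_\ga}$ every orbit enters $K_\ga\cap(a-\e,a)\neq\varnothing$. For $b>a+1/4$ your two accounts contradict each other: the sub-hole ``$H_0(\ga)$ whose closure is inside $(a,b)$'' is inadmissible because $a\in\ov{H_0(\ga)}$ but $a\notin(a,b)$, while your final paragraph's reversed verdict (that (ii) is satisfied and (i) fails) is a quantifier error: condition~(ii) must hold for \emph{all} $H$ with $\ov H\subset(a,b)$, and it fails for $H=(a+\de,b-\de)$ with $b-\de>a+1/4$, whereas condition~(i) holds trivially since every enlargement contains $\ov{H_0(\ga)}$.

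The ingredient the paper's proof actually turns on, and which your proposal never introduces, is the two-sided approximation of $a\in\mathcal K$ \emph{within the family of supercritical holes} $(a',a'+1/4)$: by Lemmas~\ref{lem:sn1} and \ref{lem:alphapq} one has $\alpha_{p_{2n-1}/q_{2n-1}}\downarrow a$ and $\alpha_{p_{2n}/q_{2n}}\uparrow a$. If $b>a+1/4$, a shrunken hole contains $[\alpha_{p_{2n-1}/q_{2n-1}},\alpha_{p_{2n-1}/q_{2n-1}}+1/4]$ for large $n$, so its survivor set is $\{0\}$ by condition~(i) of that supercritical hole, refuting (ii); if $b<a+1/4$, an enlarged hole has closure inside $(\alpha_{p_{2n}/q_{2n}},\alpha_{p_{2n}/q_{2n}}+1/4)$ for large $n$, so its survivor set has positive dimension by condition~(ii) of that hole, refuting (i). Finally, at $\alpha_{p/q}$, $\be_{p/q}$ and $1/3$ your appeal to the (unnumbered) limit lemma proves the wrong thing: that lemma yields \emph{existence} of the supercritical hole $(a,a+1/4)$ at such $a$, not \emph{uniqueness} of $b$, which is precisely the content of the proposition. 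Since $\mathcal K$ accumulates at these points from one side only, the squeeze above is unavailable on the other side, and the paper argues directly (its case~(b)): for $b>a+1/4$ it shows every $x\in(a,a+\de)$ eventually enters the shrunken hole, using $\mathcal J(a-\e,a+1/4+\e)=\{0\}$, and for $b<a+1/4$ it extracts a nonzero point $y\in(b,a+1/4)$ whose orbit avoids the relevant holes. Without these steps your argument does not pin down $b$ at any of the points the statement covers.
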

\begin{proof} We have several cases. Within this proof we always assume that $(a,b)$ is supercritical for $T$.

\smallskip\noindent
(a) Let first $a\in[0,1/4]$. If $b<1/2$, then we know $\mathcal J(a+\de, b-\de)$ has a positive dimension for all $\de<1/2-b$, which means $(a,b)$ cannot be supercritical.

Assume $b>1/2$. If $a<1/4$, then $\mathcal J(a+\de, b-\de)=\{0\}$ for all $\de<\min\{a-1/4, 1/2-b\}$. If $a=1/4$, then $T(1/4+\de)=1/2+2\de\in (1/4+\de, b-\de)$ for any $\de<(b-1/2)/3$.

\medskip\noindent
(b) Now let $a=\alpha_{p/q}$ for some $p/q$ and assume first that $b>a+1/4$. Consider the hole $(a+\de, b-\de)$ with $b-\de>a+1/4$. Since $(a,a+1/4)$ is supercritical, $\mathcal J(a-\e,a+1/4+\e)=\{0\}$ for any $\e>0$, whence $\mathcal J(a+\de, b-\de)=\{0\}$, because if $\de$ is small enough, then for any $x\in(a,a+\de)$ there exists $j\ge1$ such that $T^j x\in (a+\de, b-\de)$. This contradicts $(a,b)$ being supercritical.

Now assume $b<a+1/4$. Since $(a,a+1/4)$ is supercritical, $\mathcal J(a+\de, a+1/4-\de)$ is of positive dimension, therefore, non-empty. However, no $x\in(a,a+\de)$ is in $\mathcal J(a+\de, a+1/4-\de)$, which means that there exists $y\in(b, a+1/4)$ such that the $T$-orbit of $y$ has an empty intersection with $(a+\de, a+1/4-\de)$ and consequently, with $(a+\de, b-\de)$. Hence $(a,b)$ cannot be supercritical. The case $a=\be_{p/q}$ is similar.

\medskip\noindent
(c) If $a=1/3$ and $b\neq7/12$, then the argument goes exactly like case~(b), and we leave it to the reader as an exercise.

\medskip\noindent
(d) Finally, let $a\in\mathcal K$ be given by $\ga=[a_1+1,a_2,\dots]$, and, as usual, $s_n=s_{n-1}^{a_n}s_{n-2}, n\ge1$. Note first that $(s_n)^\infty\prec s_\infty(\ga)$ if $n$ is even and $(s_n)^\infty\succ s_\infty(\ga)$ otherwise. Indeed, $s_\infty(\ga)$ begins with $s_{n+1}s_n=s_n^{d^{n+1}}s_{n-1}s_n$, and $s_n$ begins with $s_{n-1}$, so the inequalities in question follow from Lemma~\ref{lem:sn1}. Hence by Lemma~\ref{lem:alphapq}, $a_n:=\alpha_{p_{2n-1}/q_{2n-1}}\downarrow a$ and $a_n':=\alpha_{p_{2n}/q_{2n}}\uparrow a$ as $n\to\infty$.

Assume now $b>a+1/4$; then there exists $n$ such that $a_n+1/4<b$, while $(a_n,a_n+1/4)$ is, as we know, supercritical. Since $[a_n,a_n+1/4]\subset (a,b)$, this contradicts Definition~\ref{def:super}. Similarly, if $b<a+1/4$, then $[a,b]\subset (a_n', a_n'+1/4)$ for some $n$, which is equally impossible if $(a,b)$ is supercritical.
\end{proof}

\begin{rmk}The case $a=1/3$ is rather curious. We know that the 2-cycle arises at $b_0=2/3$ and that there are no other cycles in $\mathcal J(a,b)$ for $b\in(b_1,b_0)$, where $b_1=7/12$ (see \cite{ACS}). Once $b<b_1$, we immediately get from Proposition~\ref{prop:setS} that $\dim_H \mathcal J(a,b)>0$, which implies that $\mathcal J(a,b)$ contains infinitely many cycles. As we see, this is different from the symmetrical case.
\end{rmk}

As a corollary, we obtain a complete description of the set of all possible left endpoints of supercritical points:

\begin{cor}We have
\[
\mathcal S=[0,1/4)\cup\ov{\mathcal K}\cup(3/4-\ov{\mathcal K}),
\]
i.e., $\mathcal S$ is a disjoint union of an interval and a Cantor set of zero Hausdorff dimension.
\end{cor}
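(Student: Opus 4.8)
The plan is to assemble $\mathcal S$ from three regimes — the interval $[0,1/4)$, the band $[1/4,1/3]$, and its mirror band — by combining the already-proved identity $\mathcal S\cap[1/4,1/3]=\ov{\mathcal K}$ from (\ref{eq:S14}), the right-endpoint rigidity of Proposition~\ref{prop:setS}, and the reflection symmetry $x\mapsto 1-x$. First I would record what is already in hand: Proposition~\ref{prop:degenerate} gives $[0,1/4]\subset\mathcal S$ and $1/2\in\mathcal S$; the discussion opening Section~\ref{sec:complete} gives $\mathcal S\subset[0,1/2]$ (since $a>1/2$ forces $\dim_H\mathcal J(a,b)>0$ for every $b>a$); and (\ref{eq:S14}) pins down $\mathcal S\cap[1/4,1/3]=\ov{\mathcal K}$. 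Thus the only new computation is $\mathcal S\cap(1/3,1/2]$.

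To handle that range I would use that, because $T(1-x)=1-Tx$ for all but countably many $x$, a hole $(a,b)$ is supercritical if and only if its mirror $(1-b,1-a)$ is (this is the ``swapping $0$ and $1$'' symmetry already invoked in Proposition~\ref{prop:degenerate}). The central step is to feed this through Proposition~\ref{prop:setS}: for any $a\in\mathcal S\cap(1/3,1/2)$ the associated hole is forced to be $(a,a+1/4)$, so its mirror $(3/4-a,\,1-a)$ is supercritical, giving $3/4-a\in\mathcal S$.

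Before reflecting into the known band I must excise the central gap $(1/3,5/12)$. Here I would apply supercriticality condition~(i) to the $2$-cycle $\{1/3,2/3\}$: since $\mathrm{Fix}(T)=\{0\}$, for every enlargement $(a-\e,\,a+1/4+\e)$ the orbit $\{1/3,2/3\}$ must meet the hole, and letting $\e\downarrow0$ forces $2/3\in[a,a+1/4]$ (note $a>1/3$ keeps $1/3$ out of the closed hole), i.e. $a\ge 5/12$; hence $\mathcal S\cap(1/3,5/12)=\varnothing$. For $a\in[5/12,1/2)$ one has $3/4-a\in(1/4,1/3]$, so the reflection of the previous paragraph lands $3/4-a$ in $\mathcal S\cap(1/4,1/3]=\ov{\mathcal K}\setminus\{1/4\}$, whence $a\in 3/4-\ov{\mathcal K}$; the reverse inclusion is just the mirror image of (\ref{eq:S14}), again using $b=a+1/4$. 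Adjoining $1/2=3/4-1/4\in 3/4-\ov{\mathcal K}$ gives $\mathcal S\cap(1/3,1/2]=3/4-\ov{\mathcal K}$, and combining the three regimes yields $\mathcal S=[0,1/4)\cup\ov{\mathcal K}\cup(3/4-\ov{\mathcal K})$.

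For the structural claim, $[0,1/4)$ is an interval, while $\ov{\mathcal K}\subset[1/4,1/3]$ and $3/4-\ov{\mathcal K}\subset[5/12,1/2]$ are disjoint (since $1/3<5/12$) closed sets that are nowhere dense and of zero Hausdorff dimension by the remark following Theorem~\ref{thm:doubling}; being closures of the uncountable family $\mathcal K$ they have no isolated points, so each is a Cantor set, as is their union, which is plainly disjoint from $[0,1/4)$. The hard part will be the careful treatment of the boundary case $a=1/3$ — where the hole $(1/3,7/12)$ is supercritical with $b=7/12<2/3$, so the $2$-cycle argument above must be run for $a>1/3$ strictly — together with confirming that $\ov{\mathcal K}$ has no isolated points, so that the word ``Cantor set'' is literally justified.
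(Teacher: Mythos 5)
Your proposal is correct and takes essentially the same route as the paper: the paper's one-sentence proof of this corollary rests on exactly the ingredients you assemble --- $\mathcal S\subset[0,1/2]$, the reflection symmetry $(a,b)\mapsto(1-b,1-a)$, the identity (\ref{eq:S14}), and the endpoint rigidity of Proposition~\ref{prop:setS}, with your $2$-cycle excision of $(1/3,5/12)$ being precisely the first-order-criticality argument for $(1/3,2/3)$ that the paper makes at the start of Section~\ref{sec:complete}. Your honest flag that perfectness of $\ov{\mathcal K}$ (no isolated points) still needs verification is fair --- it follows since every point of $\mathcal K$ is a limit of points $\pi(01s_\infty(\ga'))$ with $\ga'$ sharing a long continued-fraction prefix with $\ga$ --- but note the paper leaves this point implicit as well.
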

\begin{proof}It follows from Proposition~\ref{prop:setS} that if $(a,b)$ is supercritical for $T$, then no $(a',b)$ is such for $a'\neq a$ unless $b=1/2$.
\end{proof}

So, we have obtained a complete list of all supercritical holes for the doubling map:

\begin{thm}\label{thm:complete}
Each supercritical hole for the doubling map is one of the following:
\begin{enumerate}
\item $(\al, 1/2)$ or $(1/2, 1-\al)$ for any $\al\in[0,1/4]$;
\item $(\al,\al+1/4)$ or $(3/4-\al, 1-\al)$, where $\al$ is one of the following:
    \begin{itemize}
    \item $1/3$;
    \item $\pi(01 s_\infty(\ga))$, where $s_\infty(\ga)$ is the characteristic word for an arbitrary irrational $\ga<1/2$;
        \item $\pi(01 (s_n)^\infty)$ or $\pi(01(s_{n-1}^{a_n-1}s_{n-2}s_{n-1})^\infty)$, where $(s_k)_{k=-1}^n$ is the sequence of standard words parametrized by an arbitrary $n$-tuple $(a_1,\dots, a_n)\in\BN^n$ with $a_n\ge2$. The binary expansion of $\al$ is an eventually periodic sequence of period~$q$, where $p/q=[a_1+1,\dots, a_n]$.
        \end{itemize}
    \end{enumerate}
\end{thm}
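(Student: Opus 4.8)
The plan is to read off the theorem as the synthesis of two facts already established: the complete description of the set $\mathcal S$ of admissible left endpoints, and Proposition~\ref{prop:setS}, which pins down the right endpoint once the left endpoint is fixed. The preceding corollary gives $\mathcal S=[0,1/4)\cup\ov{\mathcal K}\cup(3/4-\ov{\mathcal K})$, so I would begin by fixing a supercritical hole $(a,b)$, noting $a\in\mathcal S$, and splitting into cases according to which of the three pieces contains $a$; in each case Proposition~\ref{prop:setS} determines $b$, and what remains is purely to rewrite the endpoints in the stated symbolic form. The converse---that each listed hole is genuinely supercritical---is already contained in Proposition~\ref{prop:degenerate}, Theorem~\ref{thm:doubling}, and the limit lemma, so I would only remark on it.

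First I would treat the interval pieces. If $a\in[0,1/4]$, then Proposition~\ref{prop:setS} forces $b=1/2$, yielding exactly the holes $(\al,1/2)$ with $\al\in[0,1/4]$. The point $1/2$ lies in $3/4-\ov{\mathcal K}$ (since $1/4\in\ov{\mathcal K}$) and is the single floating case of Proposition~\ref{prop:setS}: here $b$ ranges over $[3/4,1]$, giving the holes $(1/2,1-\al)$ with $\al\in[0,1/4]$. Together these are exactly item~(i). For every remaining left endpoint, i.e. $a\in\ov{\mathcal K}\setminus\{1/4\}$ or $a\in(3/4-\ov{\mathcal K})\setminus\{1/2\}$, Proposition~\ref{prop:setS} forces $b=a+1/4$; writing $a=\al$ in the first family and $a=3/4-\al$ (with $\al\in\ov{\mathcal K}\setminus\{1/4\}$) in the second reproduces the two shapes $(\al,\al+1/4)$ and $(3/4-\al,1-\al)$ of item~(ii). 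Note that $a=1/4$ sits in both $[0,1/4]$ and $\ov{\mathcal K}$, but both rules assign $b=1/2$, so the single hole $(1/4,1/2)$ is consistently recorded once, under item~(i); this is exactly why $1/4$ is excluded from the range of $\al$ in item~(ii).

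It then remains to describe $\ov{\mathcal K}\setminus\{1/4\}$ symbolically. By \eqref{eq:K} and the definition of the Sturmian hole, $\mathcal K=\{\pi(01s_\infty(\ga)):\ga\in(0,1/2)\setminus\mathbb Q\}$, which is the second bullet. The boundary points $\al_{p/q},\be_{p/q}$ of the complementary gaps of $\mathcal K$ are supplied by Lemma~\ref{lem:alphapq} in the two forms $\pi(01(s_n)^\infty)$ and $\pi(01(s_{n-1}^{a_n-1}s_{n-2}s_{n-1})^\infty)$, which is the third bullet; the assertion that these expansions are eventually periodic of period $q=|s_n|$ follows from the fact (noted just before Lemma~\ref{lem:minmax}) that $s_n$ ends in $01$ for odd $n$ and in $10$ for even $n$, so that prepending the digits $01$ to the relevant periodic block produces a periodic tail of length $q$. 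Finally $1/3=\pi((01)^\infty)=\lim_N\pi(01s_\infty(\ga_N))$ with $\ga_N=[2,N,1,1,\dots]$ is a limit point of $\mathcal K$, hence lies in $\ov{\mathcal K}$, and is listed as the first bullet.

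The step I expect to demand the most care is precisely the boundary bookkeeping where the two families and the reflection $x\mapsto1-x$ meet. One must verify that $\ov{\mathcal K}$ really is exhausted by $1/4$, $1/3$, the Sturmian points $\mathcal K$, and the periodic endpoints $\al_{p/q},\be_{p/q}$ (so that no additional limit point is missed), that $1/4$ and $1/2$ are each governed by the correct rule and not double counted, and that the reflection $x\mapsto1-x$ matches the left-endpoint set $3/4-\ov{\mathcal K}$ to the right-hand holes $(3/4-\al,1-\al)$ without gaps. Once these identifications are confirmed, the case analysis shows that every supercritical hole falls under item~(i) or~(ii), which is the assertion of the theorem.
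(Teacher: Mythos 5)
Your proposal is correct and follows essentially the same route as the paper, which states Theorem~\ref{thm:complete} without a separate proof precisely because it is the compilation you describe: the identity $\mathcal S=[0,1/4)\cup\ov{\mathcal K}\cup(3/4-\ov{\mathcal K})$ from the corollary, Proposition~\ref{prop:setS} fixing $b=a+1/4$ (or $b=1/2$, with $a=1/2$ the floating case handled by symmetry), and the symbolic descriptions of $\mathcal K$, of the gap endpoints $\al_{p/q},\be_{p/q}$ via Lemma~\ref{lem:alphapq}, and of $1/3$ as a limit of Sturmian points. Your boundary bookkeeping at $1/4$, $1/3$ and $1/2$, and the periodicity observation from $s_n$ ending in $01$ or $10$, agree with the paper's treatment.
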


\begin{rmk}\label{rmk:beta}
It would be interesting to find a complete list of supercritical holes for an arbitrary $\beta$-transformation $T_\be(x)=\be x\bmod1$ with $\be>1$. Note that some of the results of our paper can be easily transferred to such a map if $\be<2$; for instance, similarly to Proposition~\ref{prop:degenerate}, one can easily show that $H_\be:=(\be^{-2},\be^{-1})$ is supercritical for $T_\be$. As a corollary, we infer that a supercritical hole can have a measure arbitrarily close to 0, since $|H_\be|\to0$ as $\be\downarrow1$. However, the fact that not all 0-1 sequences are admissible (see, e.g., \cite{Pa}) makes this set-up significantly more complicated as far as a complete description is concerned.
 \end{rmk}

\medskip\noindent\textbf{Acknowledgement.} The author is grateful to Henk Bruin, Paul Glendinning, Kevin Hare and Oliver Jenkinson for stimulating discussions.


\begin{thebibliography}{99}

\bibitem{ACS} J.-P. Allouche, M. Clarke and N. Sidorov, Periodic unique beta-expansions: the Sharkovski\u{\i} ordering, {\em Ergod. Th. Dynam. Systems},  {\bf 29} (2009), 1055--1074.

\bibitem{BS}S. Bullett and P. Sentenac, Ordered orbits of the shift, square roots, and the devil's staircase, {\em Math. Proc. Camb. Phil. Soc.}, {\bf 115} (1994), 451--481.

\bibitem{BKT} S. Bundfuss, T. Krueger and S. Troubetzkoy, Topological and symbolic dynamics for hyperbolic systems with holes, {\em Ergod. Th. Dynam. Systems}, {\bf 31} (2011) 1305--1323.

\bibitem{GS} P. Glendinning and N. Sidorov, Unique
representations of real numbers in non-integer bases, {\em Math. Res. Lett.}, \textbf{8} (2001), 535--543.

\bibitem{HS} K. G. Hare and N. Sidorov, On cycles for the doubling map which are disjoint from an interval, http://arxiv.org/abs/1308.2905.

\bibitem{KH}A. Katok and B.~Hasselblatt, {\em Introduction to the Modern Theory of Dynamical Systems}, Encyclopedia of Mathematics and its Applications, 54, Cambridge Univ. Press, Cambridge, 1995.

\bibitem{LaMo} R. Labarca and C.~G.~Moreira, Essential dynamics for Lorenz maps on the real line and the lexicographic world, {\em Ann.~I.~H.~Poincar\'e}, {\bf 23} (2006), 683--694.

\bibitem{Loth}M. Lothaire, {\em Algebraic Combinatorics on Words}, Encyclopedia of Mathematics and its Applications, Cambridge Univ. Press, Cambridge, 2002.

\bibitem{Mig}F. Mignosi, On the number of factors of Sturmian words, {\em Theoret. Comp. Sci.}, {\bf} 82 (1991), 71--84.
    
\bibitem{Nilsson}J. Nilsson, The fine structure of dyadically badly approximable numbers, http://arxiv.org/abs/1002.4614.

\bibitem{Pa}W. Parry, On the $\be$-expansions of real
numbers, {\em Acta Math. Acad. Sci. Hung.}, \textbf{11} (1960), 401--416.

\bibitem{ZB}K. Zyczkowski and E. Bollt, On the entropy
devil's staircase in a family of gap-tent maps, {\em Physica D},
\textbf{132} (1999), 392--410.

\end{thebibliography}
\end{document}